\documentclass[12pt,oneside]{amsart}
\usepackage[utf8]{inputenc}
\usepackage[T1]{fontenc}
\usepackage[BCOR=0mm]{typearea}
\usepackage{amsmath,amssymb,amsaddr}
\usepackage{graphicx}
\usepackage[only,llbracket,rrbracket]{stmaryrd}
\usepackage{enumerate}
\usepackage[centercolon]{mathtools}
\usepackage[usenames,dvipsnames,svgnames]{xcolor}
\usepackage{hyperref}
\usepackage{nicefrac}
\usepackage{microtype}
\usepackage{lmodern}

\def\Xint#1{\mathchoice
{\XXint\displaystyle\textstyle{#1}}%
{\XXint\textstyle\scriptstyle{#1}}%
{\XXint\scriptstyle\scriptscriptstyle{#1}}%
{\XXint\scriptscriptstyle\scriptscriptstyle{#1}}%
\!\int}
\def\XXint#1#2#3{{\setbox0=\hbox{$#1{#2#3}{\int}$ }
\vcenter{\hbox{$#2#3$ }}\kern-.59\wd0}}
\def\avint{\Xint-}

\newcommand{\edge}{{\scriptstyle\mid}}
\newcommand{\grad}{\nabla}
\renewcommand{\div}{\grad\cdot}

\newcommand{\N}{\mathbf{N}}

\newcommand{\R}{\mathbf{R}}

\DeclareMathOperator{\diam}{diam}

\def\opt{{\mathrm{opt}}}
\newcommand{\zo}{\zeta_{\opt}}
\newcommand{\po}{\pi_{\opt}}
\newcommand{\dt}{{\delta}}
\newcommand{\rhoh}{\rho_{\dt,h}}

\DeclareMathOperator{\Hess}{Hess}

\newcommand{\Ha}{\ensuremath{\mathcal{H}}}

\newcommand{\io}{\int_{\Omega}}

\newcommand{\llb}{\llbracket}
\newcommand{\rrb}{\rrbracket}

\newcommand{\eps}{\varepsilon}

\newtheorem{proposition}{Proposition}
\newtheorem{theorem}{Theorem}
\newtheorem{lemma}{Lemma}

\DeclarePairedDelimiter{\abs}{\lvert}{\rvert}
\DeclarePairedDelimiter{\norm}{\lVert}{\rVert}
\DeclarePairedDelimiter{\bra}{(}{)}

\DeclarePairedDelimiter{\set}{\{}{\}}

\newcommand{\cT}{\mathcal{T}}

\DeclareMathOperator{\I}{I}
\DeclareMathOperator{\II}{II}
\DeclareMathOperator{\III}{III}
\DeclareMathOperator{\IV}{IV}

\definecolor{darkblue}{rgb}{0,0,0.6}
\hypersetup{
    pdftitle={Analysis of the implicit upwind finite volume scheme with rough coefficients},
    pdfauthor={André Schlichting and Christian Seis},
    colorlinks=true,    
    linkcolor=darkblue, 
    citecolor=darkblue, 
    filecolor=darkblue, 
    urlcolor=darkblue   
}

\begin{document}

\title[Analysis of the implicit upwind scheme with rough coefficients]{Analysis of the implicit upwind finite volume scheme with rough coefficients}

\author{Andr\'e Schlichting \and Christian Seis}

\address{Institut f\"ur Angewandte Mathematik, Universit\"at Bonn}
\email{schlichting@iam.uni-bonn.de}
\email{seis@iam.uni-bonn.de}

\date{\today}

\begin{abstract}
We study the implicit upwind finite volume scheme for numerically approximating the linear continuity equation in the low regularity DiPerna--Lions setting. That is, we are concerned with advecting velocity fields that are spatially Sobolev regular and data that are merely integrable. We prove that on unstructured regular meshes the rate of convergence of approximate solutions generated by the upwind scheme towards the unique distributional solution of the continuous model is at least~$\nicefrac{1}{2}$. The numerical error is estimated in terms of logarithmic Kantorovich--Rubinstein distances and provides thus a bound on the rate of weak convergence.
\end{abstract}
\keywords{stability estimate, finite volume scheme, implicit upwind scheme, Kantorovich--Rubinstein distance, rate of convergence, stability, weak $BV$ estimate}
\subjclass[2010]{65M08, 65M12, 65M15}

\maketitle

\section{Introduction}

The continuity equation is one of the most fundamental and at the same time most elementary partial differential equation with applications in a wide range of problems from physics, engineering, biology or social science. It describes the conservative transport of a quantity (e.g., a mass or number density, temperature, concentration, or tracer) by a given velocity field. In many important examples, the velocity field itself is related to the actual configuration of this quantity by means of a momentum-type equation, thermodynamic law or other basic principles. In this paper, we are interested in the purely linear model in which such a feedback of the actual configuration on the velocity field is neglected. For the sake of a larger applicability, we will allow for external sources and sinks.

We are thus concerned with  the linear inhomogeneous continuity equation in a bounded Lipschitz domain $\Omega $ in $\R^d$ and time interval $(0,T)$, that is,
\begin{equation}
\label{1}
\left\{
\begin{aligned}
  \partial_t \rho + \div\bra*{ u\rho} &= f &&\text{in } (0,T)\times \Omega,\\
  \rho(0,\cdot) &= \rho^0 &&\text{in }\Omega.
\end{aligned}
\right.
\end{equation}
Here, $\rho$ is the evolving (scalar) quantity with initial configuration $\rho^0$, $u$ the velocity field and  $f$ is the source-sink distribution. We will often refer to $\rho$ as a density even if it may take negative values.

We will assume in this paper that there is no loss of mass by transport across the boundary, that is, we suppose that the velocity field is tangential at $\partial\Omega$,
\begin{equation}
\label{2}
u\cdot \nu = 0\qquad\text{on } \partial \Omega,
\end{equation}
if $\nu$ denotes the outer unit normal vector along the boundary. 
Under this hypothesis we can apply a standard transformation which ensures that the total sources and sinks are balanced. We will accordingly demand that
\begin{equation}
\label{11}
\int_{\Omega} f(t,x)\, dx = 0\qquad\text{for all }t\in(0,T).
\end{equation}
The previous two assumptions together imply that the continuity equation is indeed conservative in the sense that
\begin{equation*}
\io \rho(t,x)\, dx = \io\rho^0(x)\, dx\qquad\text{for all }t\in(0,T).
\end{equation*}
It is obvious that this identity is formally true and it indeed holds under the assumptions of this paper which will be specified in the following.

In many relevant applications, for instance, in turbulent transport of mass or heat, neither the velocity field nor the transported density are expected to be regular functions. The minimal mathematical requirement for guaranteeing well-posedness of the Cauchy problem \eqref{1} is that $u$ has Sobolev or bounded variation ($BV$) regularity in the spatial variable. This is the setting studied in the ground-breaking papers of DiPerna and Lions~\cite{DiPernaLions89} and Ambrosio~\cite{Ambrosio04}. We will focus on the case of Sobolev vector fields, which is the setting originally studied by DiPerna and Lions, and will thus assume that
\begin{equation}
\label{20}
u\in L^1((0,T);W^{1,p}(\Omega))\qquad\text{and}\qquad (\div u)^-\in L^1((0,T);L^{\infty}(\Omega))
\end{equation}
for some $p\in(1,\infty]$. Here the  superscript minus sign indicates the negative part of the divergence. While the first hypothesis implies the so-called renormalization property, which, in a certain sense, is the verification of the chain rule for solutions of \eqref{1}, the second hypothesis yields the validity of the a priori estimate
\begin{equation}
\label{18}
\|\rho\|_{L^{\infty}(L^q)} \le \Lambda^{1-\frac{1}{q}} \bra*{\|\rho^0\|_{L^q} + \|f\|_{L^1(L^q)}},
\end{equation}
provided that the right-hand side is finite, where $\Lambda := \exp\bra*{\|(\div u)^-\|_{L^1(L^{\infty})}}$ is the compressibility constant associated with $u$.
Here and in the following, we use the shorter notation $L^r(X)$ instead of $L^r((0,T);X)$ for any Banach space $X$ and $r\in[1,\infty]$.
In principle, the results in \cite{DiPernaLions89,Ambrosio04} allow for the consideration of densities that are merely summable.
In this case, the relevant notion of solutions is that of renormalized solutions. In the present paper, we are interested in the smaller class of distributional solutions, which are well-defined if
\begin{equation}
\label{21}
\rho\in L^{\infty}((0,T);L^q(\Omega))\qquad\text{where}\qquad\nicefrac1p+\nicefrac1q=1.
\end{equation}
This integrability assumption is consistent with the a priori estimate  \eqref{18}.

The aim of this paper is to provide an error estimate for the implicit  upwind finite volume approximation on unstructured meshes of distributional solutions to the continuity equation \eqref{1} in the DiPerna--Lions setting  \cite{DiPernaLions89}. The upwind scheme is the most classical stable monotone and mass preserving numerical method for hyperbolic conservation laws, whose underlying idea is the numerical approximation of the upstream transport of cell averages. Convergence rates for upwind methods were intensively studied in the past forty years, starting with the pioneering work by Kuznetsov in 1976 \cite{Kuznetsov76}. In the linear case with Lipschitz regular vector fields and $BV$ or $H^1$ initial data, the optimal results on unstructured meshes are due to Johnson and Pitk\"aranta \cite{JohnsonPitkaranta86}\footnote{In fact, Johnson and Pitk\"aranta consider the more general discrete Galerkin approximation.}, Vila and Villedieu~\cite{VilaVilledieu03}, Merlet and Vovelle \cite{MerletVovelle07,Merlet07}, Delarue and Lagouti\`ere \cite{DelarueLagoutiere11}, and Aguillon and Boyer \cite{AB16}. All these works provide $\nicefrac{1}{2}$~convergence rates in terms of the mesh size~$h$.
Concerning the optimality of these results, we refer to the papers by Peterson \cite{Peterson91} and Tang and Teng~\cite{TangTeng95}. The case of lower regularity data was treated only very recently. Under a one-sided Lipschitz condition on the vector field, Delarue, Lagouti\`ere and Vauchelet estimated the approximation error by~$\mathcal{O}(h^{\nicefrac{1}{2}})$ for arbitrary measure-valued solutions \cite{DelarueLagoutiereVauchelet16}. Using similar (probabilistic) techniques, the authors of the present paper derived the same rate for distributional solutions in the class \eqref{21} under the assumption~\eqref{20}, cf.\ \cite{SchlichtingSeis16a}. Unfortunately, both works~\cite{DelarueLagoutiereVauchelet16} and~\cite{SchlichtingSeis16a} are restricted to Cartesian meshes.

In the present work, convergence rates for upwind methods on unstructured meshes in the low regularity framework are obtained for the first time. As in the classical setting, we prove a $\mathcal{O}(h^{\nicefrac{1}{2}})$ error bound in the case of rough densities \eqref{21} and Sobolev vector fields~\eqref{20}. Less importantly, we treat the implicit scheme instead of the explicit scheme studied earlier in \cite{SchlichtingSeis16a}, but we're convinced that the explicit one could be handled in a similar fashion. The step from Cartesian to arbitrary meshes appears to be substantial: While on Cartesian meshes the upwind scheme allows for an interpretation as a finite difference method, the scheme is genuinely of finite volume type on unstructured meshes. In our previous work \cite{SchlichtingSeis16a} (and that of Delarue, Lagouti\`ere and Vauchelet  \cite{DelarueLagoutiereVauchelet16}) the Cartesian mesh geometry allowed for a comparison of the Lagrangian flow associated with the continuous problem with a stochastic flow associated with a probabilistic (Markov chain) interpretation of the upwind scheme.
On unstructured meshes this comparison seems to fail. Moreover, an adaptation of the construction used for the transport equation with Lipschitz regular vector field by Delarue and Lagouti\`ere~\cite{DelarueLagoutiere11} to the continuity equation in the low regularity regime is not apparent.
To overcome this difficulty, in the present work we choose to work mostly with the Eulerian specification of the problem rather than the Lagrangian one considered in \cite{DelarueLagoutiereVauchelet16,SchlichtingSeis16a}. Instead of comparing the flows, we directly estimate the distance of  continuous and approximate solution. For this purpose, we derive a number of new optimal stability estimates for  continuity equations that built up on and further extend techniques recently  established in \cite{Seis16a,Seis16b}. The Lagrangian formulation of transport  enters our analysis only through the superposition principle.

The focus on  density functions of low regularity comes along with a change in topology: The  results established in \cite{DelarueLagoutiereVauchelet16,SchlichtingSeis16a} and the present paper quantify the rate of \emph{weak} convergence (of measures). This is contrasted by the ``classical'' setting with $BV$ or $H^1$ densities considered in~\cite{Kuznetsov76,JohnsonPitkaranta86,VilaVilledieu03,MerletVovelle07,Merlet07,DelarueLagoutiere11}, where optimal rates in strong Lebesgue norms can be proved. In~\cite{SchlichtingSeis16a} we show that this change of topology is not at all a pathology of the applied method. In fact, quite elementary examples indicate that, firstly, no rates exist that are uniform in the initial data and, secondly, rate~$\nicefrac{1}{2}$ weak convergence is (almost) sharp in the sense that for any small~$\eps$ there exists configurations that generate rates of at most~$\nicefrac{1}{2}+\eps$. In this respect, the qualitative convergence results in strong norms for upwind schemes with rough coefficients obtained by Walkington \cite{Walkington05} and Boyer \cite{Boyer12} are optimal, too.

\emph{The paper is organized as follows}: Section \ref{S:scheme} contains the precise definition of the implicit upwind finite volume scheme. In Section \ref{S:results} we present and discuss our main results. Section \ref{S:KR} provides an overview on relevant facts about Kantorovich--Rubinstein distances. In Section \ref{S:properties} we derive stability and weak $BV$ estimates for the upwind scheme. The final Section \ref{S:estimates} is devoted to the error analysis. We conclude this paper with a short appendix on generalized means.

\section{The upwind scheme: I. Definition}\label{S:scheme}
The upwind scheme is a classical finite volume method for approximating hyperbolic conservation laws.
It approximates, roughly speaking, the evolution of volume averages by means of the flux over their boundaries, where it only uses the upstream values to update neighboring cell averages in each time step.
See also the monograph~\cite{EymardGallouetHerbin00} for references and further details.

In this section, we describe the upwind scheme for the continuity equation with arbitrary source-sink distribution \eqref{1}. Since there is no mass flux across the domain boundary, cf.\ \eqref{2}, there is no need to restrict the geometry of  $\Omega$ in what follows.

We consider as tessellation $\cT$ of $\Omega$ a finite disjoint polyhedral covering of $\Omega$. That is a set of finite many closed connected sets $K$ with disjoint interiors and such that $\overline \Omega = \bigcup_{K\in\cT}K$. Moreover, for each $K\in \cT$ exists a polyhedral set $K'$ such that $K=K'\cap \overline\Omega$. These sets~$K$ are called control volumes or simply cells. The interior boundary of each cell, that is $\partial K\setminus \partial \Omega$, is the union of finitely many flat closed and connected $d-1$ dimensional faces. We write $K\sim L$ whenever  $K$ and $L$ are two neighboring cells  and  we denote by~$K\edge L$ the joint edge $K\cap L$. The unit normal vector on the edge $K\edge L$ pointing from~$K$ to~$L$ will be denoted by $\nu_{KL}$, so that $\nu_{KL} = -\nu_{LK}$. The definition of the scheme involves the edge size $|K\edge L|$ and the volume $|K|$, where, by a common abuse of notation, we have used $|\cdot|$ for both, the $d-1$ dimensional Hausdorff measure and the $d$ dimensional Lebesgue measure. Finally, the mesh size of the tessellation $\cT$ is defined as the maximal cell diameter,
\[
h := \max_{K\in\cT} \diam K.
\]
For our convergence analysis, we have to assume a certain regularity of the mesh which ensures that standard geometric constants can be chosen independently of the tessellation, and in particular, independently of the mesh size $h$. To be more specific, we require that the constants $C$ in the trace and Poincaré estimates
\begin{equation}
\label{3}
\begin{aligned}
\|\psi\|_{L^1(\partial K)} &\leq C\bra[\big]{\|\grad \psi\|_{L^1(K)} + h^{-1} \|\psi\|_{L^1(K)}}, \\
 \norm{ \psi - \psi_K}_{L^1(K)} &\leq C \, h \, \norm{\nabla \psi }_{L^1(K)},
\end{aligned}
\end{equation}
are uniform in $K\in \cT$ and $h>0$.
Hereby, $\psi_K:= \avint_K \psi \, dx$ is the average of $\psi$ on $K$.
For a proof of the above trace and Poincaré estimates with implicit constants, we refer to~\cite[Chapter 4.3 and 4.5]{EvansGariepy92}. It is worth to note that for the particular choice~$\psi\equiv 1$, the trace estimate implies the isoperimetric property
\begin{equation}\label{e:iso:mesh}
\frac{|\partial K|}{|K|} \le \frac{C}h,
\end{equation}
which in turn guarantees that the volume of each cell $K\in \cT$ is of order $h^d$, and its surface area is of order  $h^{d-1}$, uniformly in $h$.

As we are concerned with an implicit scheme, the time step size $\dt$ can be chosen independently of the mesh size. For convenience, we assume that $\dt$ is fixed in each step, and thus, we can write $t^n = n\, \dt$ for the $n$-th time step. The final time step is the largest integer $N$ satisfying $N\, \dt \le T$. We will sometimes write $\llb 0,N\rrb := \{0,1,\dots,N\}$.

The upwind scheme approximates solutions by cell averages. On each cell $K\in \cT$, the initial datum $\rho^0$ is thus approximated by its average
\begin{equation}\label{26}
\rho_K^0 := \avint_K\rho^0\, dx.
\end{equation}
Similarly, in each time interval $[t^n,t^{n+1})$ and cell $K$, the source term is replaced by
\begin{equation}
\label{27}
f^n_K := \avint_{t^{n}}^{t^{n+1}} \avint_K f\, dx\,dt.
\end{equation}
The scheme takes into account the net fluxes across the cell faces. The average normal velocity from one cell $K$ to a neighboring cell $L\sim K$  is defined by
\begin{equation}
\label{28}
u_{KL}^n := \avint_{t^n}^{t^{n+1}} \avint_{K\edge L} u\cdot \nu_{KL} \, d\Ha^{d-1} \, dt.
\end{equation}
Here, $\Ha^{d-1}$ denotes the $d-1$ dimensional Hausdorff measure. Notice that these quantities are well-defined thanks to the trace theorem for Sobolev functions (see, for instance, \cite[Chapter 4.3]{EvansGariepy92}) and assumption \eqref{20}. By the sign convention of cell normals it holds $u_{KL}^n = -u_{LK}^n$. We need to distinguish between the fluxes inwards and outwards each reference cell. We thus write $u_{KL}^{n\pm}  = \bra*{u_{KL}^n}^{\pm}$ with $(q)^+:=\max\set{0,q}$ and $(q)^-:=\max\set{0,-q}$ denoting the positive and the negative part of a quantity $q\in\R$, respectively.

The implicit upwind finite volume scheme for the linear continuity equation \eqref{1} now reads
\begin{equation}\label{scheme}
\frac{\rho_K^{n+1} - \rho_K^n}{\dt}
+\sum_{L\sim K} \frac{\abs{K\edge L}}{\abs{K}}\; \bra[\big]{u_{KL}^{n+}\,\rho_K^{n+1} - u_{KL}^{n-}\,\rho_L^{n+1}}=f_K^n
\end{equation}
for every $n\in \llb 0,N\rrb$ and $K\in \cT$. Each $\rho_K^n$ can be thus thought of as the approximate volume average of the exact solution at time $t^n$.
The \emph{approximate solution} $\rho_{\dt,h}$ is accordingly given by
\begin{equation}
\label{29}
\rho_{\dt,h} (t,x) := \rho^n_K\qquad\text{for a.e.\ }(t,x)\in \bigl[t^n,t^{n+1}\bigr)\times K.
\end{equation}
In the case $n=0$, we also write $\rho_h^0 = \rho_{\dt,h}^0 = \rho_{\dt, h}(0,\cdot)$. The approximate source term~$f_{\dt, h}$ is defined analogously.

For later purposes it is beneficial to remark that the upwind scheme \eqref{scheme} can  equivalently be formulated as
  \begin{equation}\label{scheme2}
    \frac{\rho^{n+1}_K - \rho^{n}_K}{\dt} + \sum_{L\sim K} \frac{\abs{K\edge L}}{\abs{K}} \, u_{KL}^n \,\frac{\rho_K^{n+1} + \rho_L^{n+1}}{2} + \sum_{L\sim K} \frac{\abs{K\edge L}}{\abs{K}} \, \abs*{u_{KL}^n} \, \frac{\rho_K^{n+1} - \rho_L^{n+1}}{2} = f_K^n ,
  \end{equation}
which follows from the identities  $u_{KL}^{n+} = \frac{1}{2}\bra[\big]{ \abs{u_{KL}^n} + u_{KL}^n}$ and $u_{KL}^{n-} = \frac{1}{2}\bra[\big]{ \abs{u_{KL}^n} - u_{KL}^n}$. Well-posedness (cf.~Lemma~\ref{lem:wellposed}) and quantified stability (cf.~Lemma~\ref{lem:stability-energy}) analogous to~\eqref{18} follow under the additional condition that $\dt\leq \dt_{\max}$, where for some $\kappa>1$ the maximal time step size $\dt_{\max} = \dt_{\max}(\kappa)$ is such that
\begin{equation}
\label{43}
\frac{q-1}{q} \int_I \|(\div u)^-\|_{L^{\infty}}\, dt \le \frac{\kappa-1}{\kappa}\qquad \text{for all intervals $I$ of length } \dt_{\max}(\kappa).
\end{equation}
A similar condition on the time step size was introduced earlier by Boyer, see \cite[Eq.~(3.1)]{Boyer12}. We introduce the constant $\kappa$ in order to quantify how close the approximate solutions~$\rho_{\delta,h}$ get to satisfying the a priori estimate \eqref{18}. In fact, we are able to prove a substitute for~\eqref{18} satisfied by $\rho_{\delta,h}$ in Lemma~\ref{lem:stability-energy} below, in which under condition~\eqref{43} on the maximal time step size the exponent $1-\nicefrac{1}{q}$ on the compressibility constant in~\eqref{18} is replaced by $\kappa\bra{1-\nicefrac{1}{q}}$.
Notice that in the case of divergence-free vector fields, we can set $\kappa=1$.

\section{Main results}\label{S:results}

Our main result is an estimate on the numerical error generated by the implicit upwind finite volume scheme \eqref{scheme} for the continuity equation \eqref{1}. Before stating the result, we recall or specify the underlying  hypotheses. We suppose that the initial configuration~$\rho^0$ and the source-sink distribution~$f$ are integrable functions such that
\begin{equation}
\label{24}
\rho^0\in L^q(\Omega),\qquad\text{and}\qquad  f\in L^1((0,T);L^q(\Omega))\cap L^{\infty}((0,T);W^{-1,1}(\Omega)),
\end{equation}
for some $q\in (1,\infty)$. Here $W^{-1,1}(\Omega)$ is the space that is dual to the homogeneous Lipschitz space $\dot W^{1,\infty}(\Omega)$. For the advecting velocity field $u$ we suppose that
\begin{equation}
\label{25}
u\in L^1((0,T);W^{1,p}(\Omega))\qquad\text{with}\qquad (\div u)^- \in L^1((0,T);L^{\infty}(\Omega)) ,
\end{equation}
where $p\in(1,\infty)$ is such that $\nicefrac1p+\nicefrac1q=1$. Slightly modifying  the arguments of DiPerna and Lions \cite{DiPernaLions89}, one can show that under these assumptions, the Cauchy problem for the continuity equation \eqref{1} is well-posed in the class of functions~$\rho$ with $\rho \in L^{\infty}((0,T);L^q(\Omega))$. Notice that for the purpose of well-posedness one could drop the assumption that $f\in L^{\infty}((0,T);W^{-1,1}(\Omega))$. This assumption, however, is crucial for the purpose of optimal stability estimates that will enter our analysis. Likewise, in \cite{DiPernaLions89} the vector field $u$ can possibly be unbounded, but for our numerical analysis it is important for $u$ to be in addition uniformly bounded in time and space
\begin{equation}\label{ass:u:infty}
  u \in L^\infty((0,T)\times\Omega) .
\end{equation}

Let us now give our precise result.
\begin{theorem}\label{T1}
Suppose that $\delta_{\max}$, $\rho^0$, $f$, and $u$ are given such that \eqref{43}, \eqref{24}, \eqref{25} and \eqref{ass:u:infty} hold. Let $\rho\in L^{\infty}((0,T);L^q(\Omega))$ denote the exact solution to the continuity equation \eqref{1} and for $\dt\in(0, \dt_{\max}\wedge 1)$ and $h\in (0,1)$ let~$\rho_{\dt,h}$ denote the approximate solution associated via \eqref{29} to the upwind scheme \eqref{scheme} with coefficients \eqref{26}, \eqref{27} and \eqref{28}. Suppose that the mesh is non-degenerate in the sense of~\eqref{3}. Then for any $r>0$ it holds
\begin{equation}
\label{30}
  \inf_{\pi\in\Pi\bra*{\rho(t,\cdot),\rho_{\dt, h}(t,\cdot)}} \iint \log\bra[\bigg]{\frac{|x-y|}r +1} d\pi(x,y) \lesssim 1+ \frac{\sqrt{h\,T\,\|u\|_{L^{\infty}}} + \sqrt{\dt\, T\phantom{\|}\!}\, \|u\|_{L^{\infty}}}r \!
\end{equation}
uniformly in $t\in (0,T)$.
\end{theorem}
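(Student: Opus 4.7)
I would view the discrete iterates $\{\rho_K^n\}$, extended to $\rho_{\dt,h}$ via~\eqref{29}, as a distributional solution of a \emph{perturbed} continuity equation $\partial_t\rho_{\dt,h}+\div\bra*{u\rho_{\dt,h}}=f+R_{\dt,h}$, and then apply a quantitative logarithmic Kantorovich--Rubinstein stability estimate for continuity equations with Sobolev velocity fields, in the spirit of~\cite{Seis16a,Seis16b} and to be developed in Section~\ref{S:properties}. Starting from the symmetric--antisymmetric reformulation~\eqref{scheme2}, the error distribution $R_{\dt,h}$ naturally splits into four independent pieces: (i) the $L^1$-averaging error from replacing $\rho^0$ and $f$ by their cell-means~\eqref{26}--\eqref{27}; (ii) the edge--time averaging error of the vector field $u\mapsto u_{KL}^n$; (iii) the temporal error from the implicit Euler step; and (iv) the artificial numerical dissipation $\tfrac12\abs{u_{KL}^n}(\rho_K^{n+1}-\rho_L^{n+1})$ induced by the upwinding.

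The stability estimate from Section~\ref{S:properties} then reduces the proof to controlling $D(\rho^0,\rho_h^0)+r^{-1}\int_0^T\norm{R_{\dt,h}(s)}_{W^{-1,1}}\,ds$, up to a Gr\"onwall-type factor depending on $\norm{\nabla u}_{L^1(L^p)}$, $\|\rho\|_{L^\infty(L^q)}$ and the compressibility constant~$\Lambda$ from~\eqref{18}; here $D$ denotes the log-KR functional on the left of~\eqref{30}, whose dual representation against Lipschitz classes is recalled in Section~\ref{S:KR}. The initial-data and source-averaging errors~(i) are estimated via the cell-wise Poincar\'e inequality~\eqref{3} paired with $1$-Lipschitz test functions, producing $O(h/r)$ contributions. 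The averaging error~(ii) for $u$ is absorbed into the $W^{1,p}$ control of $u$ and thus only contributes to the universal ``$1+$'' term in~\eqref{30}. The temporal error~(iii) is handled by a discrete Abel summation in $n$ combined with the $L^q$ stability of $\rho_{\dt,h}$ from Lemma~\ref{lem:stability-energy}, which yields the $\sqrt{\dt T}\,\|u\|_\infty/r$ contribution.

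The principal obstacle is term~(iv), the numerical dissipation, which carries the essence of the $\nicefrac12$ rate. After pairing against a $1$-Lipschitz test function $\varphi$, each edge contributes $\abs{u_{KL}^n}(\rho_K^{n+1}-\rho_L^{n+1})(\varphi_K-\varphi_L)$ with $|\varphi_K-\varphi_L|\lesssim h$. A Cauchy--Schwarz application over all edges and time steps, combined with the weak-$BV$-type estimate derived in Section~\ref{S:properties}---schematically of the form
\begin{equation*}
\dt\sum_{n=0}^{N}\sum_{K\sim L}\abs{K\edge L}\,\abs{u_{KL}^n}\,\bra*{\rho_K^{n+1}-\rho_L^{n+1}}^2 \lesssim \norm{\rho^0}_{L^2}^2+T\norm{f}_{L^1(L^2)}^2,
\end{equation*}
crucially without a $\|u\|_\infty$ factor on the right---and the isoperimetric bound $\sum_{K\sim L}\abs{K\edge L}\lesssim|\Omega|/h$ from~\eqref{e:iso:mesh}, then produces the announced $\sqrt{hT\,\|u\|_\infty}/r$ contribution. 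The subtlety is that, unlike on Cartesian meshes where $(\rho_K^{n+1}-\rho_L^{n+1})$ admits a clean discrete-gradient interpretation and can be summed globally, on an unstructured mesh only the edge-wise summation structure is available, and the $\sqrt{h}$ rate can be preserved only by carefully combining the mesh regularity~\eqref{3}--\eqref{e:iso:mesh} with the dual description of the log-KR functional. A further technical point is that, since the density is only assumed $L^q$ with $q>1$ (possibly below $2$), the weak-$BV$ estimate and its Cauchy--Schwarz pairing must be adapted via truncation or interpolation; this is where the Lagrangian superposition principle re-enters as a tool for controlling the truncated mass, as alluded to in the introduction.
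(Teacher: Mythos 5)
Your overall outline---recast $\rho_{\delta,h}$ as a distributional solution of a perturbed continuity equation, bound the consistency error in a dual Lipschitz/$W^{-1,1}$ pairing, and extract the $\nicefrac12$ rate from the upwind dissipation via Cauchy--Schwarz and a weak $BV$ estimate---captures the right ingredients and correctly locates the origin of the loss from formal order $1$ to $\nicefrac12$. But the single-perturbation framing has a concrete difficulty: the discrepancy between the discrete fluxes $u_{KL}^n$ and the exact field $u$ modifies the transport \emph{operator}, not the source, and cannot simply be parked in a remainder $R_{\delta,h}$ whose $W^{-1,1}$ norm is controlled independently of the roughness of $\rho_{\delta,h}$. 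The paper circumvents this by inserting intermediate exact solutions $\rho\to\rho^h\to\rho^\delta\to\rho^{\delta,h}$ (with successively discretized data and coefficients) and proving separate stability estimates (Lemmas~\ref{L1}--\ref{L3}) for each link via the triangle inequalities \eqref{15}, \eqref{5}; only then is $\rho^{\delta,h}$ compared with the scheme output by tracking the discrete rate of change of $D_r$ and splitting it into $\I^n$, $\II^n$, $\III^n$. The comparison of $u$ with its temporal average $u_\delta$ in Lemma~\ref{L3} is precisely where the authors \emph{must} switch to Lagrangian coordinates via the superposition principle, because the Eulerian stability argument does not directly handle a change of vector field.

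That is also where your account of the superposition principle goes astray. You invoke it to control the $q<2$ case of the weak $BV$ estimate ("via truncation or interpolation\ldots this is where the Lagrangian superposition principle re-enters as a tool for controlling the truncated mass"). That is not how the paper proceeds: for $q<2$ the discrete energy estimate \eqref{e:energy_estimate} is proved directly with the weight $\bra*{(\rho_K^{n+1}+\rho_L^{n+1})/2}^{\bar q-2}$, $\bar q\in(1,\min\{q,2\}]$, and in Proposition~\ref{prop:nabla} the Cauchy--Schwarz pairing pays with the complementary weight, controlled by $\|\rho_{\delta,h}\|_{L^\infty(L^{2-\bar q})}$ and the stability estimate \eqref{e:Lq:stability}. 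No truncation and no Lagrangian input is needed there; the superposition principle occurs in the proof exclusively in Lemma~\ref{L3}. Also, your schematic unweighted weak $BV$ estimate ("crucially without a $\|u\|_\infty$ factor") only holds for $\bar q=2$, i.e.\ when $q\geq2$; the relevant version for general $q$ is weighted, and the $\sqrt{\|u\|_\infty/h}$ factor in \eqref{est:nabla:1} then emerges from the Cauchy--Schwarz together with the isoperimetric bound \eqref{e:iso:mesh}. Finally, the additive $O(1)$ term in \eqref{30} is not produced by the vector-field averaging alone: each of Lemmas~\ref{L2}--\ref{L4} contributes an $O(1)$ amount through the Crippa--De Lellis/maximal-function mechanism.
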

Before discussing the result, let us briefly comment on the notation. Here and in the following we write $a\lesssim b$ whenever there is a constant $C$ independent of $h$ and $\dt$ such that $a\le C b$. Notice that we kept the supremum norm of the velocity field and the total time in the right-hand side of \eqref{30} for the purpose of dimensional consistency: Since $r$ has the dimension of a length, the term on the right-hand side is dimension-free. Other terms dependent on the velocity fields, the data or the solution have been absorbed into the implicit constant inside~``$\lesssim$''.

In our main estimate \eqref{30}, $\Pi\bra*{\rho(t,\cdot),\rho_{\dt, h}(t,\cdot)}$ is the set of all joint measures with marginals $(\rho-\rho_{\dt, h} )^+$ and $(\rho-\rho_{\dt, h} )^-$, which is non-empty because $\rho(t,\cdot)$ and $\rhoh(t,\cdot)$ have same total mass, see Equation \eqref{12} on page \pageref{12} below. The quantity on the left-hand side of \eqref{30} is a Kantorovich--Rubinstein distance, that originates from the theory of optimal mass transportation. Roughly speaking, in the original context this distance describes the minimal total cost that is necessary for transferring the configuration $\rho$ into the configuration $\rho_{\dt,h}$ if $\log(z/r+1)$ is the cost for shipping a unit volume over the distance $z$. Notice that the cost function (and thus the Kantorovich--Rubinstein distance) is singular in the limit $r\to0$. We will give a precise definition of the marginal conditions and some important features of Kantorovich--Rubinstein distances in Section \ref{S:KR} below.

The result in Theorem \ref{T1} can be interpreted as follows: Since Kantorovich--Rubin\-stein distances metrize weak convergence (of measures) \cite[Theorem 7.12]{Villani03}, by choosing $r = \sqrt{h} + \sqrt{\dt}$ one deduces from \eqref{30} that
\[
\rho_{\dt, h} \longrightarrow \rho\qquad\text{weakly with rate at most $\sqrt{h} + \sqrt{\dt}$},
\]
as $h\to0$ and $\dt \to0$. The result shows the classical $\nicefrac{1}{2}$ convergence rate for upwind schemes on unstructured meshes found earlier in \cite{Kuznetsov76,JohnsonPitkaranta86,VilaVilledieu03,MerletVovelle07,Merlet07,DelarueLagoutiere11}, just that the strong norms considered in the classical setting are traded for weak convergence measures in the DiPerna--Lions setting. In case of the explicit scheme with Cartesian meshes under a CFL condition of the form $\dt\, \|u\|_{L^{\infty}} \le h$, we obtained an analogous result in our previous work \cite{SchlichtingSeis16a}: We proved that the rate of weak convergence is at least~${\nicefrac12}$. A simple example moreover shows that this bound is optimal! Indeed, considering the one-dimensional setting with constant velocity, we prove that, on the one hand, no convergence rates can be obtained in strong norms: For every small $\eps>0$ there exist an initial configuration such that
\[
\lim_{h\to 0} h^{-\eps} \|\rho-\rho_h\|_{L^1((0,T)\times
 \Omega)} \gtrsim 1.
\]
For these data we show on the other hand that the rate of weak convergence measured in the $1$-Wasserstein distance (i.e., the Kantorovich--Rubinstein distance with linear cost function) is at most $\nicefrac12+\eps$, which almost matches the $\mathcal{O}(h^{\nicefrac12})$ error bound---though for a different weak metric. We believe that similar calculations also apply for the implicit scheme yielding the optimality of Theorem \ref{T1}. We plan to address this issue in the future. Strong convergence without rates of the implicit upwind scheme (and the more general discrete Galerkin approximation) in the setting of this paper was proved earlier by Walkington \cite{Walkington05} and Boyer \cite{Boyer12}.

The order of the upwind scheme is formally $1$. The loss in the convergence rate from~$1$ to~$\nicefrac12$ is caused by numerical diffusion, which is analytically manifested in ``weak $BV$'' estimates (cf.~Proposition~\ref{prop:nabla}). These estimates have in the (heuristic) case $|u_{KL}^n|\sim\|u\|_{L^{\infty}}\sim U$ the form
\[
\|\grad \rho_{\dt,h}\|_{L^1((0,T)\times \Omega)} \lesssim \sqrt{\frac{T}{h\,U}}\qquad\text{and}\qquad \|\partial_t \rho_{\dt,h}\|_{L^1((0,T)\times \Omega)} \lesssim \sqrt{\frac{T}{\dt}}.
\]
Such estimates are standard tools in the convergence analysis of numerical schemes for hyperbolic equations under classical regularity assumptions, see also \cite[Chapters 5--7]{EymardGallouetHerbin00}.

\section{Transport distance with logarithmic cost function}\label{S:KR}

In this section we provide the rigorous  definition of the Kantorovich--Rubinstein distance appearing in our error estimate \eqref{30} and we collect those of its properties which will be relevant in the subsequent analysis. For a general introduction into the theory of optimal transportation, we refer to Villani's  monograph \cite{Villani03}.

Given two nonnegative distributions $\rho_1$ and $\rho_2$ on $\Omega$ with same total mass, i.e.,
\begin{equation}
\label{42}
\io \rho_1\, dx = \io\rho_2\, dx,
\end{equation}
the set $\Pi(\rho_1,\rho_2)$ consists of all those joint measures $\pi$ on the product space $\Omega\times\Omega$ which have the marginals $\rho_1$ and $\rho_2$, that is,
\[
\pi[A\times \Omega ] = \int_A\rho_1\, dx \quad\text{and}\quad \pi[\Omega\times A] = \int_A \rho_2\, dx\qquad\text{for all measurable }A\subset \Omega.
\]
This is equivalent to the requirement that
\begin{equation}
\label{19}
\io \bra[\big]{\zeta(x)  + \theta(y)}\, d\pi(x,y) = \io \zeta \,\rho_1\, dx + \io \theta\,\rho_2\, dx \qquad\text{for all }\zeta,\theta\in C(\overline{\Omega}),
\end{equation}
where $C(\overline{\Omega})$ is the set of all functions that are continuous up to the boundary of  $\Omega$. For any positive number $r$, we then define
\begin{equation}
\label{41}
D_r(\rho_1,\rho_2) : = \inf_{\pi\in\Pi(\rho_1,\rho_2)}\iint \log\bra*{\frac{|x-y|}r+1}\, d\pi(x,y).
\end{equation}
Functionals of this type were originally introduced by Kantorovich to compute the minimal cost for transferring goods from producers to consumers. In this context, $d_r(x,y) := \log({|x-y|}/{r}+1)$ plays the role of a cost function. The measures $\pi$ are usually referred to as transport plans. It is not difficult to see that the infimum in \eqref{41} is in fact attained, see, for instance, Theorem 1.3 in \cite{Villani03}. The corresponding minimizer is unique because $d$ is strictly concave \cite[Theorem 2.45]{Villani03} and will in the sequel be denoted by $\po$ and called \emph{optimal transport plan}.

Instead of working with \eqref{41} directly, we will mostly consider the dual formulation
\begin{equation}
\label{8}
D_r(\rho_1,\rho_2) = \sup_{\zeta} \left\{\io \zeta\, (\rho_1-\rho_2)\, dx:\: |\zeta(x) - \zeta(y)|\le \log\bra*{\frac{\abs{x-y}}{r} + 1} \right\},
\end{equation}
cf.~\cite[Theorem 1.14]{Villani03}, which admits a maximizer $\zo$, the so-called \emph{Kantorovich potential}, cf.~\cite[Exercise 2.35]{Villani03}. In particular, it holds that
\[
D_r(\rho_1,\rho_2) = \iint \log\left(\frac{|x-y|}r +1\right)\, d\pi_{\opt}(x,y) = \io \xi_{\opt}(\rho_1-\rho_2)\, dx.
\]
The duality formula is known as the Kantorovich--Rubinstein theorem and has a number of important implications. First, $D_r(\rho_1,\rho_2)$ is a transshipment cost which only depends on the difference $\rho_1-\rho_2$. We may therefore extend the definition of $D_r(\rho_1,\rho_2)$ to densities of the same mass \eqref{42} that are not necessarily nonnegative, so that
\begin{equation}
\label{50}
D_r(\rho_1,\rho_2) = D_r\bra[\big]{(\rho_1-\rho_2)^+,(\rho_1-\rho_2)^-}.
\end{equation}
Because $d_r(x,y)$ defines a distance on $\Omega$, $D_r(\rho_1,\rho_2)$ becomes a distance on the space of functions with same (finite) mass \cite[Theorem 7.3]{Villani03}. We will accordingly refer to~$D_r(\rho_1,\rho_2)$ as a \emph{Kantorovich--Rubinstein distance}. An immediate consequence of this observation is the validity of the triangle inequality
\begin{equation}
\label{15}
D_r(\rho_1,\rho_2)\le D_r(\rho_1,\rho_3) + D_r(\rho_2,\rho_3).
\end{equation}
There is a second triangle-type inequality that we will make use of later and which immediately follows from the dual formulation \eqref{8}, namely
\begin{equation}
\label{5}
D_r(\rho_1 +  \rho_3,\rho_2 + \rho_4)\le D_r(\rho_1,\rho_2) + D_r( \rho_3, \rho_4).
\end{equation}
By the sublinearity of the logarithm it is $d_r(x,y) \leq \frac{\abs{x-y}}{r}$ and therefore
\begin{equation}\label{e:Dr:W1}
  D_r(\rho_1,\rho_2) \leq \frac{1}{r} \sup_{\zeta} \set*{ \int_\Omega \zeta\, \bra*{ \rho_1 - \rho_2} \, dx : \abs*{\zeta(x) - \zeta(y)} \leq \abs*{x-y}} = \frac{1}{r} \norm{ \rho_1 - \rho_2}_{W^{-1,1}} ,
\end{equation}
where the latter is the $1$-Wasserstein distance in its dual formulation.

One of the fundamental properties of Kantorovich--Rubinstein distances is of topological nature and plays a central role in the interpretation of our main result in Theorem~\ref{T1}: Kantorovich--Rubinstein distances metrize weak convergence (of measures) in the sense that
\[
D_r(\rho_k,\rho)\longrightarrow 0 \qquad\Longleftrightarrow \qquad\io \psi \, \rho_k\, dx\longrightarrow \io \psi \, \rho\, dx \qquad\forall \psi\in C(\overline{\Omega})
\]
as $k\to\infty$, for any sequence $(\rho_k)_{k\in\N}$ of densities of same mass as $\rho$, cf.~\cite[Theorem 7.12]{Villani03}.

We will finally mention a crucial relation between optimal transport plan $\po$ and Kantorovich potential $\zo$. The minimizer $\po$ is concentrated on the set
\[
\set[\big]{(x,y)\in \Omega\times \Omega:\: \zo(x)  - \zo(y) = d_r(x,y)} ,
\]
cf.~\cite[Exercise 2.37]{Villani03}, which in turn yields a formula for the derivative of the Lipschitz function $\zo$. Indeed, it holds that
\begin{equation}
\label{9}
\grad \zo (x)  = \grad\zo(y)= \grad_x d_r(x,y) = \frac1{|x-y|+r}\, \frac{x-y}{|x-y|}
\end{equation}
for $\po$-a.e.\ $(x,y)\in \Omega\times \Omega$. The global estimate 
\begin{equation}
\label{49}\|\grad\zo\|_{L^{\infty}} \le 1/r
\end{equation}
is a consequence of the characteristic Lipschitz condition in \eqref{8}.

\section{The upwind scheme: II. Properties and estimates}\label{S:properties}

Let us start by quoting a result on existence, uniqueness, conservativity and monotonicity of the  upwind scheme, for which we refer to~\cite{EymardGallouetHerbin00}.

\begin{lemma}\label{lem:wellposed}
Under the assumption of Theorem~\ref{T1} has the implicit upwind finite volume scheme~\eqref{scheme} a unique solution. This solution is mass preserving in the sense that
  \begin{equation*}
  \io \rho_{\dt,h}(t,x) \, dx = \io \rho_{\dt,h}^0(x) \, dx .
\end{equation*}
Moreover, if $\rho_h^0$ and $f_{\dt, h}$ are both nonnegative so is $\rho_{\dt, h}$.
\end{lemma}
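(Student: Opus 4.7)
My plan is to split the claim into three pieces---existence and uniqueness, nonnegativity preservation, and mass conservation. The first two will come together from the M-matrix property of the one-step operator, while the third is a direct summation.

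For well-posedness and monotonicity I would first rewrite~\eqref{scheme} at each step as the linear system $M^n \rho^{n+1} = \rho^n + \delta f^n$, whose square matrix $M^n$ has diagonal entries $M^n_{KK} = 1 + \delta \sum_{L\sim K} \frac{\abs{K\edge L}}{\abs{K}}\, u_{KL}^{n+}$ and non-positive off-diagonal entries $M^n_{KL} = -\delta \frac{\abs{K\edge L}}{\abs{K}}\, u_{KL}^{n-}$ for $L\sim K$ (zero otherwise), so it is a $Z$-matrix with strictly positive diagonal. I would then establish the M-matrix property---invertibility together with $(M^n)^{-1}\ge 0$---via a discrete maximum principle. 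Concretely, if $M^n v = w$ and $v_{K^\ast} = \max_K v_K$, substituting $v_L \le v_{K^\ast}$ into the $K^\ast$-th equation and collecting terms via $u_{KL}^{n+} - u_{KL}^{n-} = u_{KL}^n$ yields
\begin{equation*}
v_{K^\ast}\Bigl(1 + \delta \sum_{L\sim K^\ast} \frac{\abs{K^\ast \edge L}}{\abs{K^\ast}}\, u^n_{K^\ast L}\Bigr) \le w_{K^\ast}.
\end{equation*}
The discrete Gauss--Green identity $\sum_{L\sim K}\abs{K\edge L}\, u^n_{KL} = \delta^{-1}\int_{t^n}^{t^{n+1}}\!\int_K \div u\, dx\, dt$, which holds thanks to the tangentiality~\eqref{2}, bounds the bracket below by $1 - \int_{t^n}^{t^{n+1}}\|(\div u)^-\|_{L^\infty}\, dt$; this is strictly positive for $\delta\le \delta_{\max}$ and $\kappa\in(1,q)$ in~\eqref{43}. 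Hence $v_{K^\ast}\le 0$ whenever $w\le 0$, and the analogous inequality at the minimum closes monotonicity. Specialising to $w=0$ gives injectivity, hence invertibility, and induction on $n$ delivers existence, uniqueness, and nonnegativity of $\rho_{\dt,h}$ in the case $\rho^0_h, f_{\dt,h}\ge 0$.

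For mass preservation, I would multiply~\eqref{scheme} by $\abs{K}$, sum over $K\in\cT$, and reorder the flux sum into a sum over unordered interior edges. Using $u^n_{LK} = -u^n_{KL}$, so that $u^{n\pm}_{LK} = u^{n\mp}_{KL}$, the pair of contributions attached to each edge cancels exactly. Assumption~\eqref{11} guarantees $\sum_K \abs{K}f^n_K = \delta^{-1}\int_{t^n}^{t^{n+1}}\!\int_\Omega f\, dx\, dt = 0$, so $\sum_K \abs{K}\rho^{n+1}_K = \sum_K \abs{K}\rho^n_K$; iterating this together with~\eqref{29} gives the claimed conservation identity.

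The only delicate ingredient is extracting strict positivity of $1 + \delta \sum_{L\sim K}\frac{\abs{K\edge L}}{\abs{K}}u^n_{KL}$ from the time-step condition~\eqref{43}; the coupling between the parameter $\kappa$ and the integrability exponent $q$ has to be tracked carefully (in particular, $\kappa$ must be chosen below $q$), whereas in the divergence-free case this quantity trivially equals $1$. Everything else reduces to routine linear algebra and bookkeeping built on the antisymmetry $u^n_{KL} = -u^n_{LK}$.
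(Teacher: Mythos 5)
The paper does not actually prove Lemma~\ref{lem:wellposed}: it simply quotes the result and refers to the monograph~\cite{EymardGallouetHerbin00}. There is therefore no in-text argument to compare against, and your self-contained proof is a useful supplement. Your reduction of each time step to the linear system $M^n\rho^{n+1}=\rho^n+\dt f^n$, the identification of $M^n$ as a $Z$-matrix, and the discrete maximum principle yielding the $M$-matrix property (and hence both invertibility and positivity preservation by induction) is precisely the standard finite-volume argument one would extract from that reference, and the steps are correct: the symmetrization $u_{KL}^{n+}-u_{KL}^{n-}=u_{KL}^{n}$, the discrete Gauss--Green identity using the no-flux condition~\eqref{2}, and the edge-pair cancellation via $u^{n\pm}_{LK}=u^{n\mp}_{KL}$ together with~\eqref{11} for conservativity are all in order.

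Your observation that strict positivity of the diagonal surplus $1+\dt\sum_{L\sim K}\tfrac{|K\edge L|}{|K|}u^n_{KL}$ requires $\int_I\|(\div u)^-\|_{L^\infty}\,dt<1$, and that under~\eqref{43} this is equivalent to $\kappa<q$, is a genuine point that the paper leaves implicit (it only says ``for some $\kappa>1$''). Two remarks may be worth keeping in mind, though. First, strict positivity of that surplus is a sufficient, not a necessary, criterion for invertibility, so in principle the implicit scheme could remain solvable for larger $\dt$; your argument proves the lemma under the stated time-step restriction but does not characterize the sharp threshold. Second, the surplus should be compared against the discretized quantity $\lambda^n=\dt\|(\div u_{\dt,h}(t^n))^-\|_{L^\infty}$ appearing in the stability proof; Jensen's inequality (as in Lemma~\ref{lem:disc:norm}) guarantees $\lambda^n\le\int_{I}\|(\div u)^-\|_{L^\infty}\,dt$, so passing from the continuous to the discrete divergence only helps you, and the bound you state is correct as written.
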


The next lemma provides numerical stability for the implicit upwind scheme. The numerical stability estimate \eqref{e:Lq:stability} below is the discrete counterpart of the a priori estimate~\eqref{18}. In addition to that we achieve control of spatial and temporal discrete gradients---a manifestation of the numerical diffusion introduced by the scheme---see~\eqref{e:energy_estimate} below. We will see later in Proposition \ref{prop:nabla} that the latter convert into weak $BV$ estimates.

\begin{lemma}[Stability and energy estimate]\label{lem:stability-energy}
  Suppose that $\rho_h^0$ and $f_{\dt,h}$ are nonnegative. Let $\rhoh$ be the solution to the implicit upwind scheme~\eqref{scheme}. Then for any $q\in (1,\infty)$, any $\kappa>1$ and any $\dt \leq \dt_{\max}(\kappa)$ as defined in~\eqref{43}, it holds
  \begin{equation}\label{e:Lq:stability}
     \norm{\rhoh}_{L^\infty(L^q)} \leq \Lambda_{\dt,h}^{\kappa\bra*{1- \frac{1}{q}}} \bra*{ \norm{\rho_h^0}_{L^q} + \norm{f_{\dt,h}}_{L^1(L^q)}} ,
  \end{equation}
  where $\Lambda_{\dt,h}:= \exp\bra[\big]{\norm{ \bra*{ \div u}_{\dt,h}^- }_{L^1(L^\infty)}}$ with $(\div u)_{\dt,h}$ defined analogously to $f_{\dt,h}$ in~\eqref{27} and~\eqref{29}.

  \medskip

  \noindent In addition, for any $\bar q\in (1,\min\set{q,2}]$, the following spatial and temporal discrete gradient bounds hold
   \begin{align}
    \MoveEqLeft{\sum_{n=0}^{N-1} \sum_K |K| \bra*{\frac{\rho_K^{n+1}+\rho_K^n}2}^{\bar q-2} \bra*{\rho_K^{n+1}-\rho_K^n}^2}   \notag \\
     +\, \dt\, \MoveEqLeft{ \sum_{n=0}^{N-1}   \sum_{K} \sum_{L\sim K} \abs{K\edge L} \abs{u_{KL}^n} \bra*{\frac{\rho_K^{n+1}+\rho_L^{n+1}}{2}}^{\bar q-2} \bra*{\rho_K^{n+1} - \rho_L^{n+1}}^2} \label{e:energy_estimate} \\
    &\leq C_{\bar q} \; \Lambda_{\dt,h}^{\kappa\bra*{\bar q-1}}\; \bra*{1+ \norm{ \bra*{ \div u}_{\dt,h}^- }_{L^1(L^\infty)} }\bra*{\|\rho^0_h\|_{L^{\bar q}} + \|f_{\dt,h}\|_{L^1(L^{\bar q})}}^{\bar q}\notag ,
\end{align}
where $C_{\bar q}$ is a numerical factor with $C_{\bar q} \to \infty$ as $\bar q\to 1$.
\end{lemma}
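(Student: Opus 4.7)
The plan is to derive both estimates from a single discrete energy identity. Since Lemma~\ref{lem:wellposed} yields $\rho_{\dt,h}\ge 0$ under the sign hypotheses, we may legitimately multiply the scheme~\eqref{scheme} by $\bar q\,\dt\,|K|(\rho_K^{n+1})^{\bar q-1}$ for $\bar q\in(1,2]$ and sum over $K\in\cT$ and $n\in\llb 0,m-1\rrb$. For the discrete time derivative term I invoke the sharpened convexity inequality
\[
\bar q\,a^{\bar q-1}(a-b)\ge a^{\bar q}-b^{\bar q}+c_{\bar q}\bra*{\tfrac{a+b}{2}}^{\bar q-2}(a-b)^2,\qquad a,b\ge 0,
\]
valid for $\bar q\in(1,2]$, which produces a telescoping $\ell^{\bar q}$-norm difference together with the first dissipation sum of~\eqref{e:energy_estimate}. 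For the flux term, the antisymmetry $u_{KL}^n=-u_{LK}^n$ reorganizes the double sum as half a sum over oriented edges; applying (twice, once to the $u_{KL}^{n+}$ and once to the $u_{KL}^{n-}$ part) the companion Young-type inequality
\[
\bar q\,a\,b^{\bar q-1}\le a^{\bar q}+(\bar q-1)b^{\bar q}-c_{\bar q}\bra*{\tfrac{a+b}{2}}^{\bar q-2}(a-b)^2
\]
extracts quadratic remainders that combine into the second sum of~\eqref{e:energy_estimate} (weighted by $|u_{KL}^n|$), while the linear parts reduce to the discrete divergence expression $(\bar q-1)\dt\sum_K|K|(\div u)_K^n(\rho_K^{n+1})^{\bar q}$, bounded from below by $-(\bar q-1)\dt\,\sigma^n\norm{\rho^{n+1}}_{L^{\bar q}}^{\bar q}$ with $\sigma^n:=\norm{(\div u)_{\dt,h}^-(t^n)}_{L^\infty}$.

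Combining the above and treating the source via Hölder, $\bar q\sum_K|K|f_K^n(\rho_K^{n+1})^{\bar q-1}\le\bar q\norm{f^n_{\dt,h}}_{L^{\bar q}}\norm{\rho^{n+1}}_{L^{\bar q}}^{\bar q-1}$, yields the one-step inequality
\[
\norm{\rho^{n+1}}_{L^{\bar q}}^{\bar q}\bra[\big]{1-\dt(\bar q-1)\sigma^n}+\dt\,\mathcal{D}^n\le\norm{\rho^n}_{L^{\bar q}}^{\bar q}+\bar q\dt\norm{f^n_{\dt,h}}_{L^{\bar q}}\norm{\rho^{n+1}}_{L^{\bar q}}^{\bar q-1},
\]
where $\mathcal{D}^n$ abbreviates the sum of the two nonnegative dissipation terms at step $n$. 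For the stability bound~\eqref{e:Lq:stability} I discard $\mathcal{D}^n$ and invoke the elementary inequality $x\le A^{1/\bar q}+B$ whenever $x^{\bar q}\le A+Bx^{\bar q-1}$ (verified by expanding $(A^{1/\bar q}+B)^{\bar q}$), obtaining
\[
\norm{\rho^{n+1}}_{L^{\bar q}}\le(1-\dt(\bar q-1)\sigma^n)^{-1/\bar q}\norm{\rho^n}_{L^{\bar q}}+\bar q\dt\norm{f^n_{\dt,h}}_{L^{\bar q}}(1-\dt(\bar q-1)\sigma^n)^{-1}.
\]
The condition~\eqref{43} on $\dt_{\max}(\kappa)$ keeps the denominator bounded away from $0$ and gives $-\tfrac{1}{\bar q}\log(1-\dt(\bar q-1)\sigma^n)\le\kappa(1-\tfrac{1}{\bar q})\dt\sigma^n$ throughout the relevant range; iterating in $n$ telescopes the multiplicative factor to $\Lambda_{\dt,h}^{\kappa(1-1/\bar q)}$, proving~\eqref{e:Lq:stability} first for $\bar q\in(1,2]$. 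The extension to $q>2$ uses only the weaker convexity $\bar q\,a^{\bar q-1}(a-b)\ge a^{\bar q}-b^{\bar q}$, which holds unconditionally for $\bar q>1$, and the same Grönwall iteration.

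For the energy estimate~\eqref{e:energy_estimate} I return to the one-step inequality but retain $\mathcal{D}^n$ on the left, sum telescopically over $n\in\llb 0,N-1\rrb$, and bound the surviving source contribution $\sum_n\bar q\dt\norm{f^n_{\dt,h}}_{L^{\bar q}}\norm{\rho^{n+1}}_{L^{\bar q}}^{\bar q-1}$ using the just-proven stability estimate~\eqref{e:Lq:stability} (whose exponent $\kappa(1-1/\bar q)$ raised to the power $\bar q-1$ yields the $\Lambda_{\dt,h}^{\kappa(\bar q-1)}$ in the final bound). The accumulation of divergence corrections over $n$ produces the factor $1+\norm{(\div u)_{\dt,h}^-}_{L^1(L^\infty)}$. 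The principal obstacle is the derivation of the two quadratic-remainder convexity/Young inequalities above: for $\bar q<2$ the power $\bar q-2$ is negative, so the remainders must be interpreted carefully at degenerate points where $a+b=0$, and the constant $c_{\bar q}$ degenerates as $\bar q\to 1^+$, precisely reflecting the $C_{\bar q}\to\infty$ behavior announced in the statement.
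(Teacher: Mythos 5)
Your overall architecture---one discrete energy identity, sharpened convexity/Young inequalities with quadratic remainders, symmetrization over oriented edges---is in the right spirit and parallels the paper, which also symmetrizes and extracts a divergence piece plus a quadratic dissipation piece (the paper does so via the $q$-mean $\theta_q$ rather than your companion Young inequality, but the algebraic content is similar). The critical flaw is in the Gr\"onwall iteration for the stability bound~\eqref{e:Lq:stability}. Because you treat the time-increment term with the \emph{convexity} inequality $\bar q\,a^{\bar q-1}(a-b)\ge a^{\bar q}-b^{\bar q}+\mathrm{rem}$, your one-step inequality is a $\bar q$-th power inequality with the divergence coefficient $(\bar q-1)$ sitting in front of $\dt\sigma^n$,
\[
\norm{\rho^{n+1}}_{L^{\bar q}}^{\bar q}\bigl(1-(\bar q-1)\dt\sigma^n\bigr)\le\norm{\rho^n}_{L^{\bar q}}^{\bar q}+\bar q\dt\norm{f^n}_{L^{\bar q}}\norm{\rho^{n+1}}_{L^{\bar q}}^{\bar q-1}.
\]
Condition~\eqref{43} only guarantees $\tfrac{q-1}{q}\dt\sigma^n\le\tfrac{\kappa-1}{\kappa}$, i.e.\ $\dt\sigma^n\le\tfrac{q}{q-1}\tfrac{\kappa-1}{\kappa}$. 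With $\bar q=q$ this gives $(q-1)\dt\sigma^n\le q\,\tfrac{\kappa-1}{\kappa}$, which exceeds~$1$ as soon as $\kappa>\tfrac{q}{q-1}$ (e.g.\ $q=1.9$, $\kappa=3$). Your parenthesis $1-(\bar q-1)\dt\sigma^n$ can therefore become nonpositive under the hypotheses of the lemma, and the elementary root-extraction $x^{\bar q}\le A+Bx^{\bar q-1}\Rightarrow x\le A^{1/\bar q}+B$ then produces no usable bound: the multiplicative factor $(1-(\bar q-1)\dt\sigma^n)^{-1/\bar q}$ you iterate is simply undefined. In other words, \eqref{43} does \emph{not} keep your denominator bounded away from zero; your argument in fact requires the stricter restriction $(\bar q-1)\dt\sigma^n\le\tfrac{\kappa-1}{\kappa}$, which is $\bar q$ times tighter than the lemma's hypothesis.

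The paper avoids this precisely by \emph{not} applying a convexity inequality to the time-increment term when proving~\eqref{e:Lq:stability}. Instead it applies H\"older, getting the cross term $\norm{\rho^n}_{L^q}\norm{\rho^{n+1}}_{L^q}^{q-1}$; dividing through by $\norm{\rho^{n+1}}_{L^q}^{q-1}$ then yields a \emph{linear} iteration
\[
\norm{\rho^{n+1}}_{L^q}\bigl(1-\tfrac{q-1}{q}\lambda^n\bigr)\le\norm{\rho^n}_{L^q}+\dt\norm{f^n}_{L^q},
\]
whose coefficient $\tfrac{q-1}{q}$ matches~\eqref{43} exactly and keeps the parenthesis $\ge\kappa^{-1}>0$. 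The convexity inequality (the paper's version of your sharpened estimate) is only brought in afterwards, for the energy bound~\eqref{e:energy_estimate}, where the already-proven stability estimate can be fed into the source and divergence terms and the coefficient of $\lambda^n$ merely lands in the constant $C_{\bar q}$. To repair your proof you should split the argument the same way: derive the linear iteration from H\"older for~\eqref{e:Lq:stability}, and only then return to the convexity/Young route to harvest the dissipation terms for~\eqref{e:energy_estimate}, bounding the accumulated right-hand side with the already-established~\eqref{e:Lq:stability}. Finally, your companion inequality $\bar q\,a\,b^{\bar q-1}\le a^{\bar q}+(\bar q-1)b^{\bar q}-c_{\bar q}\bigl(\tfrac{a+b}{2}\bigr)^{\bar q-2}(a-b)^2$ is a plausible substitute for the $q$-mean machinery, but you do not prove it, and unlike the paper's $\theta_q$ comparison it is not referenced anywhere; you need to supply a proof (or a reference) with an explicit constant $c_{\bar q}>0$ valid on all of $\R_+\times\R_+$, checking in particular the degenerate direction $b\to0$ at fixed $a$ where the negative exponent $\bar q-2$ is dangerous.
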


As it will become clear in the proof, the stability estimate \eqref{e:Lq:stability} is also valid in the limiting case $q=1$. However, it is not clear to us how to extend~\eqref{e:energy_estimate} to that case. This, in fact, is the reason why we have to restrict ourselves to the setting with $q>1$.

\begin{proof}
  From the monotonicity of the scheme stated in Lemma~\ref{lem:wellposed} and the nonnegativity assumption on the data $\rho^0_h$ and $f_{\dt,h}$, we deduce that the solution $\rho_{\dt ,h}$ is nonnegative, too. For the proof, it will be convenient to use the formulation \eqref{scheme2} of the upwind scheme. Multiplication of \eqref{scheme2} by $\abs{K}$ yields
  \begin{align*}
    \abs{K} \bra[\big]{\rho^{n+1}_K - \rho^{n}_K} &+ \dt \sum_{L\sim K} \abs{K \edge L} \, u_{KL}^n \, \frac{\rho_K^{n+1}+ \rho_L^{n+1}}{2} \\
    &+ \dt \sum_{L\sim K} \abs{K\edge L} \, \abs*{u_{KL}^n} \, \frac{\rho_K^{n+1} - \rho_L^{n+1}}{2} = \dt \, \abs{K} f_K^n .
  \end{align*}
  Let us denote the terms in the above identity as $\I^n_K + \II^n_K + \III^n_K = \IV^n_K$.
  Our derivation of \eqref{e:Lq:stability} mimics the one of~\eqref{18} in the continuous setting. First, we test the equation with~$(\rho_K^{n+1})^{q-1}$ and sum over $K$. For the first term $\I^n_K$, we use the H\"older inequality to obtain
  \begin{align*}
    \I^n &:= \sum_{K} \I^n_K (\rho_K^{n+1})^{q-1} =   \sum_{K} \abs{K}  \bra*{\rho^{n+1}_K}^q - \sum_{K} \abs{K} \rho^n_K \bra{\rho^{n+1}_K}^{q-1} \\
    &\geq  \norm{\rho_{\dt,h}^{n+1}}_{L^q}^q - \norm{\rho_{\dt,h}^n}_{L^q}  \norm{\rho_{\dt,h}^{n+1}}_{L^q}^{q-1}.
  \end{align*}
  Next, by recalling that $u_{KL}^n = - u_{LK}^n$ we symmetrize the second term $\II^n_K$
  \begin{align*}
    \II^n &:= \sum_{K} \II_K^n (\rho_K^{n+1})^{q-1} \\
    &= \frac{\dt}{2}\, \sum_{K} \sum_{L\sim K} \abs{K\edge L} u_{KL}^n \frac{\rho_K^{n+1} + \rho_L^{n+1}}{2}  \bra*{ (\rho_K^{n+1})^{q-1} - (\rho_L^{n+1})^{q-1} } .
  \end{align*}
  Let us introduce the $q$-mean
  \begin{equation*}
    \theta_q: \R_+ \times \R_+ \to \R_+ \qquad\text{with}\qquad \theta_q(a,b) := \frac{q-1}{q} \frac{a^q - b^q}{a^{q-1} -b^{q-1}}
  \end{equation*}
  and note that $\theta_2(a,b)$ is the arithmetic mean (see Appendix~\ref{s:appendix} for some of its properties). By the definition of the $q$-mean, we have the identity
  \[
    \theta_q(a,b)\bra[\big]{ a^{q-1} - b^{q-1}} = \tfrac{q-1}{q} \bra[\big]{a^q - b^q}.
  \]
  In particular, $\II^n$ can be decomposed into the sum $\II^n_1 + \II^n_2$, where
  \begin{align*}
  \II_1^n &:= \frac{q-1}q\frac{\dt}{2} \sum_K\sum_{L\sim K}  \abs{K\edge L} \, u_{KL}^n \, \bra[\big]{ (\rho_K^{n+1})^{q} - (\rho_L^{n+1})^{q}} ,\\
  \II^n_2 & :=\frac{\dt}2 \sum_K \sum_{L\sim K} \abs{K\edge L}\, u_{KL}^n \, \bra*{\theta_2-\theta_q}\bra[\big]{\rho_K^{n+1} , \rho_L^{n+1}} \; \bra[\big]{ (\rho_K^{n+1})^{q-1} - (\rho_L^{n+1})^{q-1} } .
  \end{align*}
  To  estimate the term~$\II^n_1$, we do another symmetrization and use the divergence theorem in every cell $K$, so that
  \begin{align*}
    \II_1^n &= \dt\, \frac{q-1}{q} \sum_{K}  (\rho_K^{n+1})^{q}  \sum_{L\sim K} \abs{K\edge L} \, u_{KL}^n =  \dt\, \frac{q-1}{q} \sum_{K}  \abs{K} \, (\rho_K^{n+1})^{q} \, \bra*{ \div u}_K^n \\
    &\geq -  \frac{q-1}{q}\lambda^n \norm{\rho_{\dt, h}(t^{n+1})}_{L^q}^q,
  \end{align*}
where we have set  $\lambda^n := \dt\, \|(\div u_{\dt,h}(t^n))^-\|_{L^{\infty}}$ for abbreviation.  Let us now estimate the remainder term $\II^n_2 $. In Appendix~\ref{s:appendix}, we derive the following elementary estimate between $\theta_q$ and $\theta_2$:
  \begin{equation*}
   \qquad \abs*{ \theta_2\bra*{a,b}  - \theta_q\bra*{a  , b}} \leq \frac{\abs{q-2}}{q} \frac{\abs{a-b}}{2}\quad\text{for any } a,b>0 .
  \end{equation*}
  Applying this estimate inside of $\II^n_2$, we arrive at
  \begin{align*}
    \II^n_2 \geq - \frac{\dt}{2} \, \frac{\abs{q-2}}{q} \sum_{K} \sum_{L\sim K}\, \abs{K\edge L} \, \abs*{u_{KL}^n} \frac{\rho_K^{n+1} - \rho_L^{n+1}}{2} \, \bra*{(\rho_K^{n+1})^{q-1} - (\rho_L^{n+1})^{q-1}}
  \end{align*}
Likewise, summation in $K$ and symmetrization leads to a similar bound on $\III^n_K$, namely
  \begin{align*}
    \III^n&:=\sum_{K} \III_K^n (\rho_K^{n+1})^{q-1} \\
    &\geq \frac{\dt}{2}  \sum_{K} \sum_{L\sim K} \abs{K\edge L} \, \abs{u_{KL}^n}  \, \frac{\rho_K^{n+1} - \rho_L^{n+1}}{2} \, \bra*{ (\rho_K^{n+1})^{q-1} - (\rho_L^{n+1})^{q-1} }.
  \end{align*}
    Finally, the term obtained from $\IV^n_K$ after testing by $(\rho_K^{n+1})^{q-1}$ and summation in $K$ is estimated by the Hölder inequality as
  \[
    \IV^n := \sum_{K} \IV_K^n (\rho_K^{n+1})^{q-1} \leq \dt\, \norm{ \rho_{\dt,h}^{n+1}}_{L^q}^{q-1} \norm{ f_{\dt, h}^n}_{L^q}.
  \]
  A combination of all the estimates so far gives
  \begin{equation} \label{e:stability:tested}
  \begin{aligned}
    \MoveEqLeft{\norm[\big]{\rho_{\dt,h}^{n+1}}_{L^q}^q + c_q \, \frac{\dt}{2} \sum_{K} \sum_{L\sim K} \abs{K\edge L} \, \abs{u_{KL}^n}\, \bra[\big]{\rho_K^{n+1} - \rho_L^{n+1}}\bra[\big]{ (\rho_K^{n+1})^{q-1} - (\rho_L^{n+1})^{q-1} }}  \\
    &\leq \norm[\big]{\rho_{\dt,h}^n}_{L^q} \, \norm[\big]{\rho_{\dt,h}^{n+1}}_{L^q}^{q-1} +  \tfrac{q-1}{q}
    \, \lambda^n\, \norm[\big]{\rho_{\dt, h}^{n+1}}_{L^q}^q + \dt \, \norm[\big]{ \rho_{\dt,h}^{n+1}}_{L^q}^{q-1} \, \norm[\big]{ f_{\dt, h}^n}_{L^q} ,
\end{aligned}  \end{equation}
   where $c_q$ is obtained as
  \[
    \frac{1}{2} \bra*{ 1 - \frac{\abs{q-2}}{q}} = \min\set*{\frac{q-1}{q},\frac{1}{q}}  =:  c_q .
  \]
  Dropping for the moment the second term on the left hand side of~\eqref{e:stability:tested} and dividing by $\norm{ \rho_{\dt,h}^{n+1}}_{L^q}^{q-1}$ gives the bound
  \begin{equation}\label{e:stability:Gronwall}
    \norm{\rho_{\dt,h}^{n+1}}_{L^q} \bra[\big]{ 1 -  \tfrac{q-1}{q}  \lambda^n} \leq \norm{\rho_{\dt,h}^{n}}_{L^q} + \dt  \norm{ f_{\dt, h}^n}_{L^q}.
  \end{equation}
  From the choice of $\dt_{\max}(\kappa)$ in~\eqref{43} it follows $\, \tfrac{q-1}{q} \lambda^n \leq \tfrac{\kappa-1}{\kappa}$ and therefore
  \[
    \frac{1}{1- \, \tfrac{q-1}{q} \lambda^n} \leq 1 + \kappa \, \tfrac{q-1}{q}\,\lambda^n \leq \exp\bra[\big]{ \kappa \, \tfrac{q-1}{q}\lambda^n} .
  \]
An iteration of \eqref{e:stability:Gronwall} thus generates the bound \eqref{e:Lq:stability}.

In order to establish the energy estimates, we reconsider term $\I^n$. By the convexity of the map $a\mapsto a^{\bar q}$, we notice that for $\bar q \in (1,2]$ it holds
\begin{align*}
  \bar q \, a^{\bar q-1}(a-b) &\ge  a^{\bar q} -b^{\bar q} +\frac{\bar q(\bar q-1)}2 \min\{a^{\bar q-2},b^{\bar q-2}\} (a-b)^2 \\
  &\geq  a^{\bar q} -b^{\bar q} + \frac{\bar q(\bar q-1)}{2^{3-\bar q}} \bra*{ \frac{a+b}{2}}^{\bar q -2} (a-b)^2 .
\end{align*}
Applying this estimate with $a=\rho_K^{n+1}$ and $b = \rho_K^n$ yields
\[
\I^n \ge \frac1{\bar q} \sum_K |K|\bra*{(\rho^{n+1}_K)^{\bar q} - (\rho_K^n)^{\bar q}} + \frac{\bar q -1}{2^{3-\bar q}} \sum_K |K| \bra*{\frac{\rho_K^{n+1} + \rho_K^n}{2}}^{\bar q-2} \bra[\big]{\rho_K^{n+1}- \rho_K^n}^2.
\]
If we use this bound on $\I^n$, we obtain instead of \eqref{e:stability:tested} that
\begin{align*}
  &\tfrac1{\bar q} \norm[\big]{\rho_{\dt,h}(t^{n+1})}_{L^{\bar q}}^{\bar q}
    + \frac{\bar q -1}{2^{3-\bar q}}\sum_K |K| \bra[\bigg]{\frac{\rho_K^{n+1} + \rho_K^n}{2}}^{\bar q-2} \bra[\big]{\rho_K^{n+1}- \rho_K^n }^2  \\
    &\qquad +  c_{\bar q} \, \frac{\dt}{2} \sum_{K} \sum_{L\sim K} \abs{K\edge L} \abs{u_{KL}^n}  \bra[\big]{\rho_K^{n+1} - \rho_L^{n+1}}\bra*{ \bra[\big]{\rho_K^{n+1}}^{\bar q-1} - \bra[\big]{\rho_L^{n+1}}^{\bar q-1} } \\
  \leq\ &\tfrac{1}{\bar q} \norm[\big]{\rho_{\dt,h}(t^{n})}_{L^{\bar q}}^{\bar q} +  \tfrac{\bar q-1}{\bar q} \,
    \lambda^n \norm[\big]{\rho_{\dt, h}(t^{n+1})}_{L^{\bar q}}^{\bar q} + \dt \norm[\big]{ \rho_{\dt,h}(t^{n+1})}_{L^{\bar q}}^{\bar q-1} \norm[\big]{ f_{\dt, h}(t^n)}_{L^{\bar q}} .
\end{align*}
It is   furthermore convenient to rewrite the third term on the left-hand side 
by applying  
  the following elementary inequality for any $\bar q\in (1,2]$ and any $a,b>0$
  \[
    \bra*{a-b}^2 \bra*{ \frac{a+b}{2}}^{\bar q-2} \leq \bra*{a-b} \frac{a^{\bar q-1} - b^{\bar q-1}}{\bar q-1} ,
  \]
  with $a=\rho_K^{n+1}$ and $b=\rho_L^{n+1}$.
  Therewith, summation over $n$, then yields 
  \begin{align*}
    &\sum_K |K| \bra*{\frac{\rho_K^{n+1}+\rho_K^n}2}^{\bar q-2} \bra*{\rho_K^{n+1}-\rho_K^n}^2  \\
     +  \dt &\sum_{K} \sum_{L\sim K} \abs{K\edge L} \abs{u_{KL}^n}  \bra*{\frac{\rho_K^{n+1}+\rho_L^{n+1}}{2}}^{\bar q-2} \bra[\big]{\rho_K^{n+1} - \rho_L^{n+1}}^2 \\
    \leq C_{\bar q} &\bra*{ \norm[\big]{\rho^0_h}_{L^{\bar q}}^{\bar q} \! +  \norm[\big]{(\div u)^-_{\dt,h}}_{L^1(L^{\infty})} \norm[\big]{\rho_{\dt, h} }_{L^{\infty}(L^{\bar q})}^{\bar q} \! + \norm[\big]{ \rho_{\dt,h} }_{L^{\infty}(L^{\bar q})}^{{\bar q}-1} \norm[\big]{ f_{\dt, h}}_{L^1(L^{\bar q})}} .
      \end{align*}
 Applying the just proven stability estimate \eqref{e:Lq:stability}  yields \eqref{e:energy_estimate}.
 \end{proof}

Before stating and proving the afore mentioned weak $BV$ estimates for the upwind scheme, 
we note two obvious relations between exact and approximated data.
From the definitions in Section \ref{S:scheme}, it immediately follows that the initial total masses are identical in the continuous and the discrete models:
\begin{equation*}
\io \rho_h^0\, dx = \sum_{K\in\cT} \int_K \rho_h^0\, dx = \sum_{K\in\cT}\int_K \rho^0\, dx = \io \rho^0\, dx .
\end{equation*}
Similarly, as for the continuous problem, the approximate net source is vanishing at any time $t\in(0,T)$, because with $n\in\llb 0,N-1\rrb$ such that $t\in [t^n, t^{n+1})$ it holds
\begin{equation*}
\io f_{\dt,h}(t,x)\, dx = \sum_{K\in\cT} \int_K f_{\dt,h}(t,x)\, dx = \frac1{\dt}\int_{t^n}^{t^{n+1}} \io f(t,x)\, dx \, dt\overset{\eqref{11}}{ =} 0 .
\end{equation*}
We will see in Lemma~\ref{lem:wellposed} below that these two facts together entail for any $t\in (0,T)$ that
\begin{equation}
\label{12}
\io \rhoh(t,x)\, dx = \io \rho(t,x)\, dx .
\end{equation}

Let us now prove some basic estimates between  discretized and continuous versions of various quantities.
\begin{lemma}\label{lem:disc:norm}
The following estimates hold: $\|f_{\dt,h}\|_{L^1(L^q)}\le \|f\|_{L^1(L^q)}$, $\|\rho_h^0\|_{L^q}\le \|\rho^0\|_{L^q}$ and
$
\norm{\bra{\div u}_{\dt,h}^-}_{L^1(L^\infty)} \leq \norm{\bra*{\div u}^-}_{L^1(L^\infty)} .
$
\end{lemma}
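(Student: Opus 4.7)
The three bounds all rest on the same principle: averaging cannot increase an $L^q$ norm or the $L^\infty$ norm of a negative part. The technical tools are Jensen's inequality applied to the convex functions $x\mapsto\abs{x}^q$ (for $q\in[1,\infty)$) and $x\mapsto x^-$, combined with Minkowski's integral inequality when both space and time averages are present. I do not anticipate any serious obstacle; the only delicate bit is to separate the spatial and temporal averagings cleanly.

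For $\rho_h^0$, since $\rho_h^0|_K=\avint_K \rho^0\,dx$, Jensen's inequality gives $\abs{\rho_h^0|_K}^q\le \avint_K \abs{\rho^0}^q\,dx$. Multiplying by $\abs{K}$ and summing over $K\in\cT$ yields $\|\rho_h^0\|_{L^q}^q\le \|\rho^0\|_{L^q}^q$ directly.

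For $f_{\dt,h}$, I would factor the averaging through an auxiliary function $g^n(x):=\avint_{t^n}^{t^{n+1}} f(s,x)\,ds$, so that $f_K^n=\avint_K g^n\,dx$. The cell-average step is handled just as for $\rho_h^0$, giving $\|f_{\dt,h}(t,\cdot)\|_{L^q}\le \|g^n\|_{L^q}$ for every $t\in[t^n,t^{n+1})$. Minkowski's integral inequality then bounds the time average,
\[
\|g^n\|_{L^q}\le \avint_{t^n}^{t^{n+1}}\|f(s,\cdot)\|_{L^q}\,ds.
\]
Multiplying by $\dt$, summing over $n\in\llb 0,N-1\rrb$, and noting that the resulting time intervals tile $[0,N\dt)\subset(0,T)$ gives $\|f_{\dt,h}\|_{L^1(L^q)}\le \|f\|_{L^1(L^q)}$.

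For the divergence estimate, write $\alpha^n_K:=(\div u)_{\dt,h}|_{[t^n,t^{n+1})\times K}$. Applying Jensen's inequality to the convex function $x\mapsto x^-$ yields
\[
(\alpha^n_K)^-\le \avint_{t^n}^{t^{n+1}}\avint_K (\div u)^-\,dx\,ds \le \avint_{t^n}^{t^{n+1}}\|(\div u(s,\cdot))^-\|_{L^\infty}\,ds.
\]
The right-hand side is independent of $K$, so the same bound holds for $\|(\div u)_{\dt,h}^-(t,\cdot)\|_{L^\infty}$ when $t\in[t^n,t^{n+1})$. Integrating in $t$ over $[t^n,t^{n+1})$ and summing over $n$ telescopes exactly as in the previous paragraph, producing the stated inequality.
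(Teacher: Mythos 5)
Your proof is correct, and for the $f_{\dt,h}$ estimate you take a genuinely different route from the paper. The paper handles the simultaneous space--time averaging by first verifying the bound on product functions $f(t,x)=f^1(t)f^2(x)$ (where the two averages factor completely) and then extending to general $f$ by density of continuous functions in $L^1(L^q)$ combined with the Stone--Weierstra{\ss} theorem. You instead apply Jensen in the spatial variable (as for $\rho_h^0$) and then control the temporal averaging via Minkowski's integral inequality, $\|\avint_{t^n}^{t^{n+1}} f(s,\cdot)\,ds\|_{L^q}\le \avint_{t^n}^{t^{n+1}}\|f(s,\cdot)\|_{L^q}\,ds$. Your argument is more direct and avoids the approximation step entirely; what the paper's density trick buys is that it reduces everything to the one-variable Jensen estimate already established for $\rho_h^0$, so no additional inequality needs to be invoked. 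For the divergence bound you likewise give a self-contained telescoping argument rather than appealing to ``the argument above'' as the paper does, but the substance (Jensen for $a\mapsto a^-$, then the $L^\infty$ bound being uniform in $K$) is the same. Both approaches are valid.
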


\begin{proof}
The estimate for the initial data is a straight-forward consequence of Jensen's inequality. Indeed,
\[
\|\rho_h^0\|_{L^q}^q  = \sum_{K\in\cT} |K| \left|\avint_K \rho^0\, dx\right|^q \le \sum_{K\in\cT} \int_K |\rho^0|^q\, dx = \|\rho^0\|_{L^q}^q.
\]
By essentially the same reasoning, for any source term of the form $f(t,x) = f^1(t)f^2(x)$, it holds that
\[
\|f_{\dt,h}\|_{L^1(L^q)} = \|f^1_{\dt}\|_{L^1}\|f^2_h\|_{L^q} \le \|f^1\|_{L^1}\|f^2\|_{L^q} = \|f\|_{L^1(L^q)},
\]
if by subscript $\dt$ and $h$ we denote the discretization in time and space, respectively. It remains to conclude with an approximation argument: Thanks to the density of smooth functions in $L^1(L^q)$, it is enough to prove the statement for continuous functions. Moreover, the Stone--Weierstra\ss\ theorem enables us to furthermore approximate continuous functions uniformly by functions of the form $f^1(t)f^2(x)$---for which the estimate is shown above. The proof for $\div u$ follows from a combination of the Jensen inequality applied to the convex function $a\mapsto (a)^-$ with the argument above.
\end{proof}

As a consequence, under the assumptions of Theorem \ref{T1}, the expressions in the right-hand sides of \eqref{e:Lq:stability} and \eqref{e:energy_estimate} are both $\mathcal{O}(1)$, for instance,
\begin{equation}
\label{47}
\|\rho_{\dt,h}\|_{L^{\infty}(L^q)} \lesssim1.
\end{equation}

Let us now establish the  weak $BV$ estimates in space and time, which will occur in later estimates and are a manifestation of the numerical diffusion.
\begin{proposition}[Weak $BV$ estimates]\label{prop:nabla}
  Suppose the assumptions of Theorem~\ref{T1} hold and let $\rhoh$ be the solution to the implicit upwind scheme~\eqref{scheme}. Then, it holds
  \begin{align}
    \sum_{n=0}^{N-1} \sum_{K} \abs{K}  \abs{ \rho_K^{n+1} - \rho_K^n} &\lesssim \sqrt{\frac{T}{\dt}} , \label{est:nabla:2}\\
    \dt \sum_{n=0}^{N-1} \sum_K\sum_{L\sim K} |K\edge L||u_{KL}^n| |\rho_K^{n+1}-\rho_L^{n+1}| &\lesssim \sqrt{\frac{T \norm{u}_{L^\infty}}{h}} . \label{est:nabla:1}
  \end{align}
\end{proposition}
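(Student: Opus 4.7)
The plan is to derive both bounds from the energy estimate \eqref{e:energy_estimate} of Lemma~\ref{lem:stability-energy} via a weighted Cauchy--Schwarz inequality, combined with the stability bound \eqref{e:Lq:stability} (and its consequence \eqref{47}) to control the resulting auxiliary sums. Since the scheme \eqref{scheme} is linear in the pair $(\rho^0,f)$, I first reduce to the nonnegative setting by decomposing $\rho^0=(\rho^0)^+-(\rho^0)^-$ and $f=f^+-f^-$, letting $\rho_{\dt,h}^\pm$ denote the discrete solutions produced by the data $((\rho^0)^\pm,f^\pm)$, and noting that $\rho_{\dt,h}=\rho_{\dt,h}^+-\rho_{\dt,h}^-$, so that each summand in \eqref{est:nabla:2} and \eqref{est:nabla:1} is bounded by the corresponding nonnegative piece. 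Fix $\bar q\in(1,\min\{q,2\}]$ throughout.

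For the temporal estimate \eqref{est:nabla:2}, I would split the absolute difference as
\[
|\rho_K^{n+1}-\rho_K^n| \;=\; \left(\tfrac{\rho_K^{n+1}+\rho_K^n}{2}\right)^{\!(\bar q-2)/2}\!|\rho_K^{n+1}-\rho_K^n|\,\cdot\,\left(\tfrac{\rho_K^{n+1}+\rho_K^n}{2}\right)^{\!(2-\bar q)/2}
\]
and apply Cauchy--Schwarz with weights $|K|$ in the double sum over $n,K$. The square of the first factor reproduces the first line of \eqref{e:energy_estimate}, hence is $\mathcal{O}(1)$. The remaining auxiliary sum $A:=\sum_n\sum_K|K|\bigl((\rho_K^{n+1}+\rho_K^n)/2\bigr)^{2-\bar q}$ equals $N|\Omega|\lesssim T/\dt$ when $\bar q=2$; otherwise Hölder in space with conjugate exponent $\bar q/(2-\bar q)$ together with \eqref{47} yields $\sum_K|K|\rho_K^{2-\bar q}\lesssim|\Omega|^{(2\bar q-2)/\bar q}\|\rho_{\dt,h}\|_{L^{\bar q}}^{2-\bar q}\lesssim 1$, so that $A\lesssim T/\dt$ in either case. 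Taking a square root delivers \eqref{est:nabla:2}.

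The spatial estimate \eqref{est:nabla:1} follows the same scheme, this time applying Cauchy--Schwarz with weights $\dt\,|K\edge L|\,|u_{KL}^n|$; the first squared factor then matches the second line of \eqref{e:energy_estimate} and is again $\mathcal{O}(1)$. For the complementary sum
\[
B:=\dt\sum_{n}\sum_{K}\sum_{L\sim K}|K\edge L|\,|u_{KL}^n|\left(\tfrac{\rho_K^{n+1}+\rho_L^{n+1}}{2}\right)^{\!2-\bar q},
\]
I would bound $|u_{KL}^n|\le\|u\|_{L^\infty}$, use the subadditivity $(a+b)^{2-\bar q}\le a^{2-\bar q}+b^{2-\bar q}$ (valid since $2-\bar q\in[0,1)$) to split the integrand into contributions from $K$ and $L$, and invoke the isoperimetric property \eqref{e:iso:mesh} to collapse $\sum_{L\sim K}|K\edge L|\lesssim|K|/h$. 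The resulting spatial sum is then handled by the same Hölder-plus-stability argument as above, yielding $B\lesssim T\|u\|_{L^\infty}/h$ and hence \eqref{est:nabla:1} upon taking a square root.

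The main technical point I anticipate is the choice of the weighted Cauchy--Schwarz so that the first squared factor matches \emph{exactly} the weighted quadratic functionals appearing on the left-hand side of \eqref{e:energy_estimate}, while the residual weight $(\cdot)^{(2-\bar q)/2}$ on the second factor is tame enough to be absorbed via the $L^\infty_t(L^{\bar q}_x)$ stability bound; identifying the correct exponent $\bar q-2$ for the pairing is the crucial insight. Everything else---the subadditivity of the power, the isoperimetric estimate \eqref{e:iso:mesh}, and Hölder's inequality---is standard bookkeeping.
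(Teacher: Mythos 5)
Your proposal is correct and follows essentially the same route as the paper: reduction to nonnegative data, a weighted Cauchy--Schwarz inequality with the weight $\bigl((\rho_K^{n+1}+\rho_L^{n+1})/2\bigr)^{\bar q-2}$ pairing the first factor to the energy estimate \eqref{e:energy_estimate}, and control of the complementary sum via subadditivity of $x\mapsto x^{2-\bar q}$, the isoperimetric bound \eqref{e:iso:mesh}, and H\"older plus the stability estimate \eqref{e:Lq:stability}. The only cosmetic difference is that the paper decomposes the already-discretized data $(\rho_h^0)^\pm$, $(f_{\dt,h})^\pm$ rather than discretizing $(\rho^0)^\pm$, $f^\pm$, but both decompositions are legitimate and yield the same bounds through Lemma~\ref{lem:disc:norm}.
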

 Notice that these estimates do not have any counterparts in the continuous model \eqref{1}. In particular, no compactness estimates can be inferred.
\begin{proof}
  The solution to the upwind scheme can be split into $\rho_{\dt,h} = (\rho_{\dt, h})_+ - (\rho_{\dt, h})_-$ where $(\rho_{\dt,h})_{\pm}$ is the nonnegative discrete solution with data $(\rho_h^0)^{\pm}$ and $(f_{\dt,h})^{\pm}$ (first discretized then decomposed). Hence, once we have established the estimates~\eqref{est:nabla:1} and~\eqref{est:nabla:2} for nonnegative data, the estimate follows for general data just by the triangle inequality and the above observation. Therefore, let $\rho_{\dt,h}$ be the nonnegative solution of the upwind scheme to the nonnegative data $\rho_h^0$ and $f_{\dt,h}$. The term~\eqref{est:nabla:1} is estimated by applying the Cauchy-Schwarz inequality, whereby we smuggle in an additional weight $\bra*{ (\rho_K^{n+1} + \rho_L^{n+1})/2}^{\bar q-2}$ for some $\bar q \in (1,\min\set{q,2}]$:
  \begin{align*}
    \MoveEqLeft{\sum_K\sum_{L\sim K} |K\edge L||u_{KL}^n| |\rho_K^{n+1}-\rho_L^{n+1}|} \\
    &\leq \bra*{\sum_K \sum_{L\sim K} \abs{K \edge L} \abs{u_{KL}^n} \bra*{ \rho_K^{n+1} - \rho_L^{n+1}}^2 \bra*{ \frac{\rho_K^{n+1} + \rho_L^{n+1}}{2}}^{\bar q-2}}^{\frac{1}{2}}  \\
    &\qquad \times \bra*{\sum_K \sum_{L\sim K} \abs{K \edge L} \abs{u_{KL}^n}  \bra*{ \frac{\rho_K^{n+1} + \rho_L^{n+1}}{2}}^{2-\bar q}}^{\frac{1}{2}}  =: \bra*{\I^n \times \II^n}^{\frac{1}{2}}.
  \end{align*}
 After summing over $n$ and another H\"older inequality in time,  the term $\I^n$ can be directly estimated by the energy estimate~\eqref{e:energy_estimate} from Lemma~\ref{lem:stability-energy} for $\bar q\in (1,\min\set{q,2}]$.
  For the term $\II^n$, we observe that $\bra*{(a+b)/2}^{2-\bar q} \leq a^{2-\bar q} + b^{2-\bar q}$ for any $a,b> 0$ and use the regularity assumption on the mesh~\eqref{e:iso:mesh} to estimate
  \begin{align*}
    \II^n \leq 2\|u\|_{L^{\infty}}\sum_K    \bra*{\rho_K^{n+1}}^{2-\bar q} \sum_{L\sim K} \abs{K\edge L} \lesssim \frac{\|u\|_{L^{\infty}}}{h} \sum_{K} \abs{K} \bra*{\rho_K^{n+1}}^{2-\bar q} .
    \end{align*}
  Now, we choose $\bar q = q$ if $q\in (1,2)$ and obtain after summation over $n$:
  \begin{align*}
    \sum_{n=0}^{N-1}\dt \,\II^n \lesssim \frac{T \|u\|_{L^{\infty}}}h \|\rho_{\dt,h}\|_{L^{\infty}(L^{2-q})}.
  \end{align*}
  Since $q\geq 1$, we have that $2-q < q$ and can estimate $\|\rho_{\dt,h}\|_{L^{\infty}(L^{2-q})}$ by $\|\rho_{\dt,h}\|_{L^{\infty}(L^{q})}$ up to a factor depending on $\abs{\Omega}$.
  In the case $q\geq 2$, we choose $\bar q = 2$ in \eqref{e:energy_estimate} and deduce the analogous result. Notice that the solution is uniformly bounded in  any $L^{\infty}(L^{\bar q})$ with $\bar q\le q$ thanks to the stability estimate \eqref{e:Lq:stability} and the bounds from Lemma \ref{lem:disc:norm}. A combination of the previous estimates yields \eqref{est:nabla:1}.
  
  The estimate~\eqref{est:nabla:2} follows along the same lines by first applying the Cauchy-Schwarz inequality with the same weight and then the a priori estimates \eqref{e:Lq:stability} and \eqref{e:energy_estimate}.
\end{proof}

\section{Error estimates and proof of Theorem~\ref{T1}}\label{S:estimates}
In this section, we present the proof of our main result Theorem~\ref{T1}. We will see that there are two classes of  discretization errors  contributing to  estimate \eqref{30}. The first class consists of errors introduced by the discretization of time and space, and thus by the corresponding finite volume approximations of data and coefficients. These errors are  $\mathcal{O}(h+\dt)$. The second class is caused by the discretization of the scheme, which is usually referred to as the truncation error. These latter errors are all $\mathcal{O}(h^{\nicefrac{1}{2}}+\dt^{\nicefrac{1}{2}})$, and are related to the phenomenon of numerical diffusion, see, for instance, Section~2.4 in~\cite{SchlichtingSeis16a}.

\subsection{Discretization of data}

We begin the error analysis by addressing the various errors caused by the discretization of time and  space.  The first one concerns the discretization of the time steps for the continuous problem.
\begin{lemma}[Discretization of time]\label{L1}
For $n\in \llb 0,N-1\rrb$ and any $t\in [t^n, t^{n+1})$ it holds
\begin{equation*}
  D_r\bra[\big]{\rho(t),\rho(t^n)} \lesssim \frac{\dt \norm{u}_{L^\infty}}{r}  + \frac{\dt\, \|f\|_{L^{\infty}(W^{-1,1})}}{r}.
\end{equation*}
\end{lemma}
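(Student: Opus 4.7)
The plan is to reduce the Kantorovich--Rubinstein distance to a $W^{-1,1}$ estimate via \eqref{e:Dr:W1}, and then to exploit the weak form of the continuity equation \eqref{1} tested against a Lipschitz function. Thus I first observe that
\[
D_r\bra[\big]{\rho(t),\rho(t^n)} \le \frac1r \norm[\big]{\rho(t) - \rho(t^n)}_{W^{-1,1}},
\]
so that it suffices to bound the right-hand side uniformly in $t\in[t^n, t^{n+1})$ by $\dt\,(\|u\|_{L^\infty} + \|f\|_{L^\infty(W^{-1,1})})$ times an implicit constant that is allowed to depend on $\|\rho\|_{L^\infty(L^q)}$ and $|\Omega|$.

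By duality, I pick an arbitrary $\zeta\in \dot W^{1,\infty}(\Omega)$ with $\|\grad \zeta\|_{L^\infty}\le 1$. Since $\rho$ is a distributional solution of~\eqref{1} and $\zeta$ is time-independent, testing the equation against $\zeta$ between the times $t^n$ and $t$ gives
\begin{equation*}
\io \zeta\bra[\big]{\rho(t) - \rho(t^n)}\, dx = \int_{t^n}^{t} \io u\cdot\grad \zeta\, \rho\, dx\, ds + \int_{t^n}^{t} \io \zeta\, f\, dx\, ds.
\end{equation*}
No boundary contributions arise thanks to the tangency condition~\eqref{2}. The first term on the right-hand side is estimated by $\|u\|_{L^\infty}\|\grad\zeta\|_{L^\infty}\|\rho\|_{L^\infty(L^1)} \cdot \dt \lesssim \dt\,\|u\|_{L^\infty}$, where the $L^1$-norm of $\rho$ is controlled by $\|\rho\|_{L^\infty(L^q)}$ and $|\Omega|^{1-1/q}$. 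For the source term I use the very definition of the space $W^{-1,1}(\Omega)$ as the dual of $\dot W^{1,\infty}(\Omega)$, obtaining $|\io \zeta f\, dx| \le \|f(s,\cdot)\|_{W^{-1,1}}$ for a.e.\ $s$, hence the second integral is bounded by $\dt\,\|f\|_{L^\infty(W^{-1,1})}$.

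Taking the supremum over admissible $\zeta$ yields
\[
\norm[\big]{\rho(t) - \rho(t^n)}_{W^{-1,1}} \lesssim \dt\,\|u\|_{L^\infty} + \dt\,\|f\|_{L^\infty(W^{-1,1})},
\]
and dividing by $r$ gives the claimed bound. The only mildly subtle point is the justification of the density-type step that lets one use a merely Lipschitz (rather than smooth) test function $\zeta$ against the distributional equation: this is done by standard mollification in space, using that $\rho\in L^\infty(L^q)$ and $u\in L^1(W^{1,p})$ with $1/p+1/q=1$ so the product $u\rho$ is integrable, and passing to the limit is immediate since $\grad\zeta_\varepsilon\to \grad\zeta$ boundedly in $L^\infty$. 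This is really the only place where one has to be a bit careful; everything else is an application of the triangle-in-time inequality combined with the dual formulation \eqref{8}.
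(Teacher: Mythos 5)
Your proof is correct and is essentially the same as the paper's, with one minor cosmetic difference: the paper tests the weak formulation directly against the Kantorovich potential $\zeta_t$ for $D_r(\rho_t,\rho_{t^n})$ and then uses the Lipschitz bound $\|\grad\zo\|_{L^\infty}\le 1/r$ from~\eqref{49}, whereas you first pass to $\frac{1}{r}\|\rho(t)-\rho(t^n)\|_{W^{-1,1}}$ via~\eqref{e:Dr:W1} and then test against an arbitrary $1$-Lipschitz function. Both routes use the same two ingredients (the $1/r$-Lipschitz property of the dual potential and the weak formulation of~\eqref{1} integrated over $[t^n,t]$), and your closing remark about justifying Lipschitz test functions mirrors the rigorous argument the paper cites from~\cite{Seis16a}.
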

The next error is caused by the spatial discretization of the data. Here $f_h$ is defined analogously to $\rho_h^0$, thus still continuous in time. The statement of this lemma is an immediate consequence of the stability estimates for continuity equations established in~\cite{Seis16a}. The argument for the initial data is already given in~\cite[Lemma 8]{SchlichtingSeis16a}.
\begin{lemma}[Spatial discretization of data]\label{L2}
Let $\rho^h$ be the solution of the continuity equation~\eqref{1} with initial datum $\rho^0_h$ and  source-sink distribution $f_{h}$. Then it holds for any $t\in [0,T]$ that
\begin{equation*}
  D_r\bra[\big]{\rho(t),\rho^h(t)} \lesssim 1 + \frac{h }{r} .
\end{equation*}
\end{lemma}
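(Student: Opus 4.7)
The plan is to invoke the Kantorovich--Rubinstein stability theory for continuity equations developed in~\cite{Seis16a} and then control the resulting data-discretization errors along the lines of~\cite[Lemma~8]{SchlichtingSeis16a}. Since $\rho$ and $\rho^h$ satisfy~\eqref{1} with the \emph{same} advecting velocity field $u$ but with data $(\rho^0,f)$ versus $(\rho_h^0, f_h)$, the stability theorem from~\cite{Seis16a} will produce an estimate of the form
\[
D_r(\rho(t),\rho^h(t)) \;\lesssim\; 1 + D_r(\rho^0,\rho_h^0) + \int_0^t D_r\bigl(f(s,\cdot),f_h(s,\cdot)\bigr)\,ds,
\]
with an implicit constant depending on $\Lambda$ and $\|u\|_{L^1(W^{1,p})}$ but not on the mesh. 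The marginal conditions needed for $D_r$ to be well defined hold because $\rho^0$ and $\rho_h^0$ share the same total mass thanks to~\eqref{26}, and the analogous identity holds pointwise in time for $f(s,\cdot)$ and $f_h(s,\cdot)$ by~\eqref{11} and the definition of $f_h$.

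It remains to show $D_r(\rho^0,\rho_h^0)\lesssim h/r$ together with the corresponding pointwise-in-time bound for the source. Both follow from a single short duality computation. Any admissible competitor $\zeta$ in the dual formulation~\eqref{8} is $1/r$-Lipschitz by~\eqref{49}, so its oscillation on any cell $K$ is at most $h/r$. Because $\rho_h^0$ equals the cell-average of $\rho^0$, one may subtract the constant $\zeta_K := \avint_K \zeta\,dx$ inside each cell integral without altering it, yielding
\[
\int_\Omega \zeta\,(\rho^0-\rho_h^0)\,dx = \sum_{K\in\cT}\int_K(\zeta-\zeta_K)(\rho^0-\rho_h^0)\,dx \;\lesssim\; \frac{h}{r}\,\|\rho^0\|_{L^1}.
\]
Taking the supremum over $\zeta$ gives $D_r(\rho^0,\rho_h^0)\lesssim h/r$. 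Applying exactly the same reasoning pointwise in $s$ to $f(s,\cdot)-f_h(s,\cdot)$ and then integrating in time yields the analogous bound $\int_0^t D_r(f(s,\cdot),f_h(s,\cdot))\,ds \lesssim (h/r)\,\|f\|_{L^1(L^1)}$.

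Combining the three ingredients produces the desired inequality $D_r(\rho(t),\rho^h(t))\lesssim 1 + h/r$. I do not expect any substantive obstacle: the only nontrivial input is the stability estimate imported from~\cite{Seis16a}, and its extension to accommodate the source term is a routine consequence of the linearity of~\eqref{1} together with the triangle-type inequalities~\eqref{15} and~\eqref{5} (essentially a Duhamel decomposition and application of the source-free stability estimate slice by slice in time). The main bookkeeping point to watch is the verification of the mass-balance so that each $D_r$ in the chain is well defined, but this is immediate from the definitions.
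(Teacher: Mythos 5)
Your proposal is correct and follows essentially the same route as the paper: invoke the Kantorovich--Rubinstein stability estimate for two solutions driven by the same vector field (attributed to~\cite{Seis16a}, with the paper reproving the key Crippa--De Lellis ingredient~\eqref{16} inline), then bound $D_r(\rho^0,\rho_h^0)$ and $D_r(f_s,f_{h,s})$ by $h/r$ via the duality formula~\eqref{8} and the fact that cell averages annihilate the cell-wise mean-zero part of the Kantorovich potential. The only cosmetic difference is which factor you move the averaging operator onto (the paper shifts it onto $\zo$ via the self-adjointness of $(\cdot)_h$, while you keep it on $\rho^0-\rho_h^0$ and subtract $\zeta_K$); both yield the same $h/r$ bound.
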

Thirdly, we consider the error caused by discretizing  data and coefficients in the time variable. Here the subscript $\dt$ refers to discretization by averaging over $\bigl[t^n,t^{n+1}\bigr)$.
\begin{lemma}[Temporal discretization of data]\label{L3}
Let $\rho^{\dt}$ be the solution of the continuity equation~\eqref{1} with driving vector field $u_{\dt}$ and source-sink distribution~$f_{\dt}$. Then it holds for any $\ell\in\llb 0,N\rrb$ that
\begin{equation*}
  D_r\bra[\big]{\rho(t^{\ell}),\rho^{\dt}(t^{\ell})} \lesssim 1 + \frac{\dt \norm{u}_{L^\infty}}{r} .
\end{equation*}
\end{lemma}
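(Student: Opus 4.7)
The plan is to deduce the statement as a direct consequence of the stability estimate for distributional solutions of the continuity equation established in~\cite{Seis16a,Seis16b}, in the same spirit as the proof of Lemma~\ref{L2}. Since both $\rho$ and $\rho^\delta$ start from the common initial datum $\rho^0$ and differ only through the pair of driving coefficients $(u,f)$ versus $(u_\delta,f_\delta)$, this machinery yields an inequality of the schematic form
\begin{equation*}
D_r(\rho(t^\ell),\rho^\delta(t^\ell)) \lesssim 1 + \frac{1}{r}\bra[\big]{\text{perturbation in }u} + \frac{1}{r}\bra[\big]{\text{perturbation in }f},
\end{equation*}
where the additive ``$1$'' encodes the Crippa--De Lellis type logarithmic correction coming from the Sobolev regularity of $u$ and is independent of $\delta$.

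The key point of the proof is then to extract a factor of $\delta$ from the two perturbation terms. By the very definition of the time-averages, the differences $u-u_\delta$ and $f-f_\delta$ satisfy the \emph{exact} orthogonality
\begin{equation*}
\int_{t^n}^{t^{n+1}}(u-u_\delta)(s,x)\,ds = 0, \qquad \int_{t^n}^{t^{n+1}}(f-f_\delta)(s,x)\,ds = 0,
\end{equation*}
pointwise in $x$ and for every $n\in\llb 0,N-1\rrb$. Therefore, when these perturbations are tested against any time-dependent function $\zeta(s,x)$ appearing in Seis's duality formulation of $D_r$, one is free to replace $\zeta(s,x)$ on each slab $[t^n,t^{n+1})$ by the increment $\zeta(s,x)-\zeta(t^n,x)$ at no cost. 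In the stability argument, $\zeta$ is the Kantorovich potential $\zo$ (of size $\|\grad\zo\|_{L^\infty}\le 1/r$ by \eqref{49}) transported backward along the flow of $u$; it is therefore Lipschitz in time with constant controlled by $\|u\|_{L^\infty}\,\|\grad\zo\|_{L^\infty}\lesssim \|u\|_{L^\infty}/r$. The substitution $\zeta(s)\mapsto \zeta(s)-\zeta(t^n)$ hence costs exactly $\delta\,\|u\|_{L^\infty}/r$ on each slab, uniformly in $n$, which after summation produces the claimed bound.

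The main obstacle is to justify this envelope/backward-transport argument rigorously in the low regularity DiPerna--Lions class, where the flow is only a regular Lagrangian flow and $\zo$ is merely Lipschitz. This is the content of the stability framework of~\cite{Seis16a,Seis16b}, which we invoke as a black box; the source term $f-f_\delta$ is controlled by the same orthogonality trick, using the assumption $f\in L^\infty(W^{-1,1})$ to make sense of the pairing against the Lipschitz-in-time increment $\zeta(s)-\zeta(t^n)$. No further ingredient beyond those of Lemma~\ref{L2} is needed.
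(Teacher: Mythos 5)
Your proposal does not match the paper's argument, and it contains a genuine gap. The paper itself warns (just after the statement of Lemma~\ref{L3}) that ``the argument for this last error estimate substantially differs from those for Lemmas~\ref{L1} and~\ref{L2} \dots we have to subtly change from Eulerian to Lagrangian coordinates,'' and the reason for this is precisely the obstruction your plan runs into. When you linearise the rate of change of $D_r(\rho,\rho^\dt)$ in the Eulerian stability framework, the perturbation term stemming from the velocity has the form
\[
\int_{t^n}^{t^{n+1}}\!\io \grad\zo(s,\cdot)\cdot\bra[\big]{u(s,\cdot)-u_\dt(s,\cdot)}\,\rho^\dt(s,\cdot)\,dx\,ds .
\]
The orthogonality $\int_{t^n}^{t^{n+1}}(u-u_\dt)\,ds=0$ is killed not by one time-dependent factor but by two: both $\grad\zo(s,\cdot)$ \emph{and} $\rho^\dt(s,\cdot)$ vary on the slab, and replacing only $\zeta(s)$ by $\zeta(s)-\zeta(t^n)$ does not make the leading contribution vanish. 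Freezing $\rho^\dt$ as well would require controlling $\rho^\dt(s)-\rho^\dt(t^n)$ in a topology that pairs with $\grad\zo\cdot(u-u_\dt)$, but the latter is only bounded, not Lipschitz, so the $W^{-1,1}$ bound $\lesssim\dt\norm{u}_{L^\infty}$ on the density increment is not usable here. Moreover, the claim that $\zo$ ``is the Kantorovich potential transported backward along the flow'' and hence time-Lipschitz with constant $\norm{u}_{L^\infty}/r$ is not available: in the Eulerian stability argument of~\cite{Seis16a} the potential at each time $t$ is a fresh maximiser of the dual problem for the pair $(\rho(t),\rho^\dt(t))$, and no quantitative time-regularity of this object is established (it is determined only up to additive constants and only on the support of the marginals, so such regularity is genuinely delicate).

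The paper's actual proof bypasses the potential entirely. Via the superposition principle it writes $\rho_{t}$ and $\rho^\dt_{t}$ as push-forwards under the Lagrangian flows $\phi_t$ and $\phi^\dt_t$ (plus the corresponding source contributions), bounds $D_r$ from above by $\io\log(\abs{\phi_{t^\ell}-\phi^\dt_{t^\ell}}/r+1)\abs{\rho^0}\,dx$, and controls the slab-by-slab increments of this quantity. The key algebraic step is a double-integral rewriting
\[
\int_{t^n}^{t^{n+1}}\bra[\big]{u(\sigma,\phi_\sigma)-u_\dt(\sigma,\phi^\dt_\sigma)}\,d\sigma
=\frac1\dt\int_{t^n}^{t^{n+1}}\!\!\int_{t^n}^{t^{n+1}}\bra[\big]{u(\sigma,\phi_\sigma)-u(\sigma,\phi^\dt_\tau)}\,d\sigma\,d\tau ,
\]
which eliminates the time-averaged $u_\dt$ in favour of the original $u$ with a mismatched flow argument. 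The Morrey/maximal-function estimate~\eqref{7} then compares $u(\sigma,\phi_\sigma)$ and $u(\sigma,\phi^\dt_\tau)$, and the essential Lipschitz-in-time property is carried by the \emph{flow maps}, not by the Kantorovich potential: $\abs{\phi_\sigma-\phi^\dt_\tau}\le\abs{\phi_{t^n}-\phi^\dt_{t^n}}+2\dt\norm{u}_{L^\infty}$, which after dividing by $\abs{\phi_{t^n}-\phi^\dt_{t^n}}+r$ yields exactly the factor $1+2\dt\norm{u}_{L^\infty}/r$. Your heuristic about backward-transported potentials is in spirit a Lagrangian intuition, but it has to be executed in the Lagrangian picture where the objects one needs to be Lipschitz in time actually are.
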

The argument for this last error estimate substantially differs from those for Lemmas~\ref{L1} and~\ref{L2} (and those from \cite{Seis16b,SchlichtingSeis16a}). In fact, to control the errors caused by the discretization of time we have to subtly change from Eulerian to Lagrangian coordinates.

We postpone the proofs of Lemmas \ref{L1} to \ref{L3} until Subsection \ref{SS:proofs} below. In view of the triangle inequality \eqref{15}, these first results imply that for any $t\in (0,T) $ it holds
\begin{equation}
\label{48}
D_r(\rho(t),\rho_{\delta, h}(t)) \lesssim D_r(\rho^{\delta, h}(t^{\ell}),\rho_{\delta, h}(t^{\ell})) + 1 + \frac{\delta\, \|u\|_{L^{\infty}}}r +\frac{\dt\, \|f\|_{L^{\infty}(W^{-1,1})} }r  + \frac{h}r,
\end{equation}
where $\ell\in \llb 0,N\rrb$ is such that $t\in \left[t^{\ell},t^{\ell+1}\right)$ and $\rho^{\delta,h}$ is the unique distributional solution to the continuity equation \eqref{1} with initial datum $\rho_h^0$, source-sink distribution $f_{\delta,h}$ and velocity $u_{\delta}$.  The remaining error $D_r(\rho^{\delta, h}(t^{\ell}),\rho_{\delta, h}(t^{\ell}))$ governs the convergence rate. Its treatment will be illustrated in what follows.

\subsection{The error caused by the scheme}

By the virtue of eliminating the discretization errors in the previous subsection, we tacitly assume from here on that the continuous problem is solved with data $\rho_h^0$, $f_{\dt,h}$ and $u_{\dt}$ and that $t = t^{\ell}$ for some $\ell\in \llb0,N\rrb$, i.e., we assume that $\rho(t)  =\rho^{\delta ,h}(t^{\ell})$.  We will see that the truncation error caused by the scheme is $\mathcal{O}(\sqrt{h}+\sqrt{\dt})$.
Our goal in the following is to portray the main steps in the estimate of the discrete rate of change of the distance $D_r(\rho(t^n),\rho_{\delta, h}(t^n))$.
In fact, it turns out that instead of analyzing $\rho_{\delta,h}$, it is more convenient to study the piecewise linear temporal approximate solution defined by
\[
\widehat \rho_{\dt,h}(t,x) := \frac{t-t^n}{\dt} \rho^{n+1}_K + \frac{t^{n+1}-t}{\dt} \rho^n_K\quad\text{for a.e.\ }(t,x)\in \bigl[t^n,t^{n+1}\bigr)\times K.
\]
Since $\widehat \rho_{\dt,h}(t^n) = \rho_{\dt,h}(t^n)$ for any $n\in \llb0,N\rrb$, there is no additional  error term to consider by replacing the piecewise constant (in time) approximation by the piecewise linear approximation. The advantage of considering $\widehat \rho_{\dt,h}$ instead of $\rho_{\dt,h}$ is that the former is (weakly) differentiable.  Indeed, in a first step, we formally compute the rate of change of the Kantorovich--Rubin\-stein distance between $\rho$ and $\widehat \rho_{\dt,h}$:
\[
\frac{d}{dt} D_r(\rho,\widehat \rho_{\dt, h}) = \io \zo (\partial_t\rho-\partial_t\widehat \rho_{\dt,h})\, dx.
\]
Here, $\zo = \zo(t)$ denotes the Kantorovich potential corresponding to $D_r(\rho,\widehat \rho_{\dt, h}) $ at  time $t$. By construction, it holds $\partial_t\widehat \rho_{\dt,h} = \dt^{-1} (\rho_h^{n+1} - \rho_h^n)$ where we have set $\rho_h^n = \rho_h(t^n)$ for any $n$. The time derivative of $\rho$ does in general not exist. However, using the continuity equation and formally integrating by parts, the above formula may be rewritten as
\[
\frac{d}{dt}D_r(\rho,\widehat \rho_{\dt, h}) = \io \grad\zo\cdot u \, \rho\, dx  + \io \zo f\, dx - \frac1{\dt}\io \zo \bra*{\rho_h^{n+1}  - \rho_h^n}\, dx.
\]
In fact, arguing as in  \cite[Lemma 1]{Seis16a}, this identity can be established rigorously. We now use formulation \eqref{scheme2} of the upwind scheme and notice that the source term drops out because $f=f_{\dt,h}$ by the virtue of Lemma \ref{L2}. After integration over $\bigl[t^n,t^{n+1}\bigr)$, we thus find
\begin{equation}\label{46}
D_r\bra[\big]{\rho(t^{n+1}),\rho_{\dt, h}(t^{n+1})} - D_r\bra[\big]{\rho(t^n),\rho_{\dt ,h}(t^n)} = \I^n + \II^n + \III^n  ,
\end{equation}
where
\begin{align*}
\I^n &:= \int_{t^n}^{t^{n+1}} \io \grad \zo \cdot u\bra[\big]{\rho-  \rho_h^{n+1}}\, dx\,dt
,\\
\II^n&:= \int_{t^n}^{t^{n+1}} \io \grad \zo\cdot u \, \rho_h^{n+1} \, dx\,dt + \dt \sum_K (\zo)^n_K\sum_{L\sim K} \abs{K\edge L} u_{KL}^n \frac{\rho_K^{n+1} + \rho_L^{n+1}}{2}
,\\
\III^n&:= \dt \sum_K(\zo)^n_K \sum_{L\sim K} \abs{K\edge L}|u_{KL}^n| \frac{\rho_K^{n+1} - \rho_L^{n+1}}{2},
\end{align*}
with $(\zo)_K^n := \avint_{t^n}^{t^{n+1}}\avint_K \zo\, dx\,dt$.

Up to a shift in the time variable, the first error term $\I^n$ can be controlled by the techniques developed in \cite{Seis16a} to establish stability estimates for continuity equations.  The time shift can then be compensated with the help of   the temporal weak $BV$ estimate~\eqref{est:nabla:2} from Proposition~\ref{prop:nabla}.
\begin{lemma}[Estimate of $\I^n$] \label{L4}
\[
\sum_{n=0}^{N-1} \I^n \lesssim 1 + \frac{\sqrt{\dt\, T}}r\|u\|_{L^{\infty}} .
\]
\end{lemma}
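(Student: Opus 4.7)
The natural plan is to decompose the integrand by inserting the linear interpolant $\widehat\rho_{\dt,h}(t)$. Since $\rho(t)-\rho_h^{n+1} = \bra[\big]{\rho(t)-\widehat\rho_{\dt,h}(t)} + \tfrac{t^{n+1}-t}{\dt}\bra[\big]{\rho_h^n-\rho_h^{n+1}}$, this produces a splitting $\I^n = \I_1^n + \I_2^n$ in which the first piece carries the genuine transport stability contribution (the universal $\mathcal{O}(1)$) and the second is a purely temporal remainder (bounded through the weak $BV$ estimate). Recall that $u = u_\dt$ is constant in $t$ on $[t^n,t^{n+1})$, which simplifies the subsequent manipulations.

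For $\I_1^n$, I would follow the stability strategy of~\cite{Seis16a}. Since $\zo(t)$ is the Kantorovich potential associated with $D_r(\rho(t),\widehat\rho_{\dt,h}(t))$, formula~\eqref{9} yields $\grad\zo(x)=\grad\zo(y)=\grad_x d_r(x,y)$ for $\po(t)$-a.e.\ $(x,y)$, so that
\[
\io \grad\zo\cdot u\,\bra[\big]{\rho - \widehat\rho_{\dt,h}}\,dx = \iint \grad_x d_r(x,y)\cdot\bra[\big]{u(x)-u(y)}\,d\po(x,y).
\]
Bounding $\abs{\grad_x d_r(x,y)}\le 1/\abs{x-y}$ and invoking the standard maximal-function estimate $\abs{u(x)-u(y)}\lesssim \abs{x-y}\bra[\big]{M\abs{\grad u}(x)+M\abs{\grad u}(y)}$ dominates the integrand pointwise on $\spt\po$ by $M\abs{\grad u}(x)+M\abs{\grad u}(y)$. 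Applying Hölder's inequality with exponents $(p,q)$ against the marginals of $\po$, using the $L^p$-boundedness of $M$ (valid since $p>1$), and summing over $n$ then delivers
\[
\sum_{n=0}^{N-1}\abs{\I_1^n}\lesssim \norm{\grad u}_{L^1(L^p)}\bra[\big]{\norm{\rho}_{L^\infty(L^q)}+\norm{\rhoh}_{L^\infty(L^q)}}\lesssim 1,
\]
where the last step uses the continuous a~priori bound~\eqref{18} together with the discrete counterpart~\eqref{e:Lq:stability} and Lemma~\ref{lem:disc:norm}.

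For $\I_2^n$ I would exploit the universal bound $\norm{\grad\zo}_{L^\infty}\le 1/r$ from~\eqref{49} to pass to $L^1$ in $x$:
\[
\abs{\I_2^n}\le \frac{\norm{u}_{L^\infty}}{r}\int_{t^n}^{t^{n+1}}\frac{t^{n+1}-t}{\dt}\,dt\sum_K \abs{K}\abs[\big]{\rho_K^{n+1}-\rho_K^n} = \frac{\dt\,\norm{u}_{L^\infty}}{2r}\sum_K\abs{K}\abs[\big]{\rho_K^{n+1}-\rho_K^n}.
\]
Summation in $n$ combined with the temporal weak $BV$ estimate~\eqref{est:nabla:2} yields $\sum_n\abs{\I_2^n}\lesssim \sqrt{\dt\,T}\,\norm{u}_{L^\infty}/r$, which matches the second contribution in the target bound.

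The principal obstacle I anticipate is the rigorous justification of the duality identity rewriting $\io\grad\zo\cdot u\,(\rho-\widehat\rho_{\dt,h})\,dx$ as a difference-quotient integral against $\po$: $\zo$ is merely Lipschitz and $u$ only Sobolev, so the pointwise use of~\eqref{9} requires a careful approximation argument. Fortunately, this step is essentially the same as in~\cite{Seis16a}, so the plan is to adapt that argument rather than reinvent it, after which the two-term bookkeeping above closes the estimate.
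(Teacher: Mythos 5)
Your proof is correct and takes essentially the same route as the paper: the identical splitting $\I^n = \I_1^n + \I_2^n$ by inserting $\widehat\rho_{\dt,h}$, the Crippa--De~Lellis/maximal-function argument (which is precisely the content of the paper's estimate~\eqref{16}, re-derived here rather than cited) for $\I_1^n$, and the Lipschitz bound~\eqref{49} combined with the temporal weak $BV$ estimate~\eqref{est:nabla:2} for $\I_2^n$. The only cosmetic differences are that you evaluate $\int_{t^n}^{t^{n+1}}(t^{n+1}-t)/\dt\,dt$ exactly rather than bounding it by $\dt$, and you suppress the Sobolev extension $\bar u$ in the Morrey-type inequality~\eqref{7}; neither affects the result.
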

The second error term $\II^n$ has to be rewritten using an integration by parts. An important ingredient in the following bound is  the spatial weak $BV$ estimate~\eqref{est:nabla:1} from Proposition~\ref{prop:nabla}.
\begin{lemma}[Estimate of $\II^n$]\label{L6}
\[
\sum_{n=0}^{N-1} \II^n \lesssim \frac{\sqrt{h T \norm{u}_{L^\infty}} + h}{r}.
\]
\end{lemma}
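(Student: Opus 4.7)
The plan is to reduce $\II^n$ to a purely spatial expression by exploiting that, after the reductions of the previous subsection (in particular Lemma~\ref{L3}), the advecting field in the continuous problem is $u=u_\dt$, which is piecewise constant in time on $[t^n,t^{n+1})$. Since $\grad$ then commutes with the temporal average,
\[
\int_{t^n}^{t^{n+1}}\io\grad\zo\cdot u\,\rho_h^{n+1}\,dx\,dt = \dt\io\grad\widehat\zo^n\cdot u_\dt^n\,\rho_h^{n+1}\,dx,\quad \widehat\zo^n(x):=\avint_{t^n}^{t^{n+1}}\zo(t,x)\,dt,
\]
and $\widehat\zo^n$ inherits the $(1/r)$-Lipschitz bound in $x$ from $\zo$. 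Writing $\phi:=\widehat\zo^n$, $v:=u_\dt^n$, and observing that $\bar\phi_K:=\avint_K\phi\,dx=(\zo)_K^n$, one has $\II^n=\dt\,E^n$ with the purely spatial quantity
\[
E^n = \io\grad\phi\cdot v\,\rho_h^{n+1}\,dx + \sum_K\bar\phi_K\sum_{L\sim K}|K\edge L|\,v_{KL}\,\tfrac{\rho_K^{n+1}+\rho_L^{n+1}}{2}.
\]

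I would then apply the divergence theorem cell-wise in the first integral and symmetrize both contributions using the antisymmetries $v_{KL}=-v_{LK}$, $\nu_{KL}=-\nu_{LK}$. Introducing the face average $\bar\phi_{KL}:=\avint_{K\edge L}\phi\,d\Ha^{d-1}$, decompose
\[
\int_{K\edge L}\phi\,v\cdot\nu_{KL}\,d\Ha^{d-1}=\bar\phi_{KL}|K\edge L|v_{KL}+\tilde E_{KL},\qquad \int_K\phi\,\div v\,dx=\bar\phi_K\sum_{L\sim K}|K\edge L|v_{KL}+\tilde F_K,
\]
where $\tilde E_{KL}=\int_{K\edge L}(\phi-\bar\phi_{KL})\,v\cdot\nu_{KL}\,d\Ha^{d-1}$ and $\tilde F_K=\int_K(\phi-\bar\phi_K)\,\div v\,dx$ absorb the spatial oscillation of $\phi$. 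The product identity $ab-cd=(a-c)\tfrac{b+d}{2}+\tfrac{a+c}{2}(b-d)$ applied to $\rho_K\bar\phi_K-\rho_L\bar\phi_L$ produces the term $\tfrac{\rho_K+\rho_L}{2}(\bar\phi_K-\bar\phi_L)|K\edge L|v_{KL}$, which \emph{cancels exactly} its counterpart arising from the discrete sum. The upshot is $E^n=\mathrm{A}^n+\mathrm{B}^n-\mathrm{C}^n$ with
\[
\mathrm{A}^n=\sum_{\{K,L\}}(\rho_K^{n+1}-\rho_L^{n+1})\bigl[\bar\phi_{KL}-\tfrac{\bar\phi_K+\bar\phi_L}{2}\bigr]|K\edge L|v_{KL},
\]
\[
\mathrm{B}^n=\sum_{\{K,L\}}(\rho_K^{n+1}-\rho_L^{n+1})\,\tilde E_{KL},\qquad \mathrm{C}^n=\sum_K\rho_K^{n+1}\,\tilde F_K.
\]

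The Lipschitz bound on $\phi$ gives $|\bar\phi_{KL}-\tfrac12(\bar\phi_K+\bar\phi_L)|\le h/r$, so the spatial weak $BV$ estimate~\eqref{est:nabla:1} directly yields $\dt\sum_n|\mathrm{A}^n|\lesssim(h/r)\sqrt{T\|u\|_{L^\infty}/h}=\sqrt{h\,T\|u\|_{L^\infty}}/r$. For $\mathrm{C}^n$ the pointwise bound $|\phi-\bar\phi_K|\le h/r$ and H\"older's inequality give $|\mathrm{C}^n|\lesssim(h/r)\|\rho_h^{n+1}\|_{L^q}\|\div v\|_{L^p}$, and summing in $n$ via the stability estimate~\eqref{e:Lq:stability} together with $u\in L^1(W^{1,p})$ yields $\dt\sum_n|\mathrm{C}^n|\lesssim h/r$. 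For $\mathrm{B}^n$ I would exploit the mean-zero identity $\int_{K\edge L}(\phi-\bar\phi_{KL})\,d\Ha^{d-1}=0$ to rewrite $\tilde E_{KL}=\int_{K\edge L}(\phi-\bar\phi_{KL})(v\cdot\nu_{KL}-v_{KL})\,d\Ha^{d-1}$, then apply the trace estimate~\eqref{3} together with the cell Poincar\'e inequality to the scalar $v\cdot\nu_{KL}-v_{KL}$ to obtain $\|v\cdot\nu_{KL}-v_{KL}\|_{L^1(K\edge L)}\lesssim\|\grad v\|_{L^1(K)}$; combined with $\|\phi-\bar\phi_{KL}\|_{L^\infty(K\edge L)}\le h/r$ this produces $|\tilde E_{KL}|\lesssim(h/r)\|\grad v\|_{L^1(K)}$, and a H\"older pairing with $\rho_h^{n+1}$ followed by summation in $n$ delivers $\dt\sum_n|\mathrm{B}^n|\lesssim(h/r)\|\rho_{\dt,h}\|_{L^\infty(L^q)}\|u\|_{L^1(W^{1,p})}\lesssim h/r$. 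Summing the three contributions yields the claim. The main technical subtlety is that $\zo(t,\cdot)$ carries no a priori regularity in time; it is precisely the piecewise temporal constancy of $u_\dt$ (from Lemma~\ref{L3}) that enables the reduction to the spatially Lipschitz auxiliary function $\widehat\zo^n$ and makes the trace-Poincar\'e estimate for $\mathrm{B}^n$ compatible with the $W^{1,p}$-regularity of $u$.
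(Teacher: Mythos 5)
Your proposal is correct and follows essentially the same route as the paper's proof: after reducing to piecewise-constant-in-time $u_\dt$ and the time-averaged Kantorovich potential, you perform cell-wise integration by parts and split the resulting expression into three contributions that correspond (up to trivial rewriting) to the paper's $\II_1^n,\II_2^n,\II_3^n$ — your $\mathrm{B}^n$, $\mathrm{A}^n$, $\mathrm{C}^n$ respectively — and you estimate them with exactly the same tools (the trace/Poincar\'e estimate \eqref{3}, the Lipschitz bound \eqref{49} on $\zo$, the spatial weak $BV$ estimate \eqref{est:nabla:1}, and the $L^q$ stability bound). The only cosmetic difference is that you organize the algebra via the product identity $ab-cd=(a-c)\frac{b+d}{2}+\frac{a+c}{2}(b-d)$ rather than the paper's double symmetrization; the resulting error terms are identical.
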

The last error term $\III^n$ is of purely diffusive origin
 and is controlled again thanks to~\eqref{est:nabla:1} from Proposition~\ref{prop:nabla}.
\begin{lemma}[Estimate of $\III^n$]\label{L7}
\[
\sum_{n=0}^{N-1} \III^n \lesssim \frac{\sqrt{h T \norm{u}_{L^\infty}}}{r} .
\]
\end{lemma}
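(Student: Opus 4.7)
The plan is to symmetrize the double sum defining $\III^n$ so that a discrete gradient of the Kantorovich potential $\zo$ appears, and then to close the estimate with the Lipschitz bound \eqref{49} together with the spatial weak $BV$ estimate~\eqref{est:nabla:1}.

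First I would exploit the antisymmetry $u_{KL}^n = -u_{LK}^n$, which implies $\abs{u_{KL}^n} = \abs{u_{LK}^n}$. Relabeling $K \leftrightarrow L$ in the double sum in $\III^n$ and averaging the two expressions yields the identity
\[
\III^n = \frac{\dt}{4} \sum_{K}\sum_{L \sim K} \abs{K\edge L}\, \abs{u_{KL}^n}\, \bra*{(\zo)_K^n - (\zo)_L^n}\bra*{\rho_K^{n+1} - \rho_L^{n+1}}.
\]
This is exactly the step that converts the numerical diffusion term $\abs{u_{KL}^n}$ into a pairing of a finite-difference of $\zo$ against a finite-difference of $\rho_{\dt,h}$.

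Next I would control the edge increment of the averaged potential. Writing
\[
(\zo)_K^n - (\zo)_L^n = \avint_{t^n}^{t^{n+1}}\avint_K \avint_L \bra*{\zo(t,x) - \zo(t,y)}\, dy\, dx\, dt
\]
and invoking the Lipschitz bound $\|\grad\zo\|_{L^\infty}\leq 1/r$ from \eqref{49} together with $\abs{x-y}\leq 2h$ for $(x,y)\in K\times L$ with $K\sim L$, one obtains
\[
\bigl|(\zo)_K^n - (\zo)_L^n\bigr| \leq \frac{2h}{r}.
\]
Plugging this into the symmetrized expression above and summing over $n$ gives
\[
\sum_{n=0}^{N-1}\abs{\III^n} \lesssim \frac{h}{r}\,\dt\sum_{n=0}^{N-1}\sum_K \sum_{L\sim K} \abs{K\edge L}\,\abs{u_{KL}^n}\,\abs*{\rho_K^{n+1}-\rho_L^{n+1}}.
\]
The spatial weak $BV$ estimate~\eqref{est:nabla:1} then bounds the remaining double sum by $\sqrt{T\|u\|_{L^\infty}/h}$, and the claimed estimate $\sqrt{h T\|u\|_{L^\infty}}/r$ follows by combining this with the prefactor $h/r$.

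There is no serious obstacle in this proof: once the symmetrization is carried out the estimate is essentially a one-line computation. The only point that requires some care is the bookkeeping of signs in the symmetrization step, where the antisymmetry of $u_{KL}^n$ must be played off against the symmetry of $\abs{u_{KL}^n}$ so that the cancellations occur in the factor involving $\zo$ rather than in the one involving $\rho_{\dt,h}$; this is what produces a discrete gradient of the \emph{potential} (which is Lipschitz with the small constant $1/r$) rather than of the density (which enjoys no such regularity).
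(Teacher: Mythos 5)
Your proof is correct and follows exactly the same route as the paper: symmetrize the double sum using the antisymmetry of $u_{KL}^n$ to convert it into a pairing of the discrete gradient of $\zo$ against the discrete gradient of $\rho_{\dt,h}$, bound the former by $h/r$ via the Lipschitz estimate \eqref{49}, and close with the spatial weak $BV$ bound \eqref{est:nabla:1}. The only cosmetic difference is the order of the two steps (and the paper's proof writes $(\zo)_K^{n+1}$ where the definition of $\III^n$ gives $(\zo)_K^n$, an apparent typo that you avoid); the argument is the same.
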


The proof of Theorem~\ref{T1} now follows by a combination of the Lemmas~\ref{L1}--\ref{L7}. Indeed, summing over $n\in \llb0,\ell-1\rrb$ in \eqref{46}, using the fact that $\rho(0) = \rho^{\dt,h}(0) = \rho_{\dt,h}(0)$ by the hypothesis of this subsection, and using Lemmas \ref{L4}--\ref{L7}, we find that
\begin{align*}
D_r(\rho(t^{\ell}),\rho_{\dt,h}(t^{\ell})) & = \sum_{n=0}^{\ell-1} \left( D_r(\rho(t^{n+1}),\rho_{\dt,h}(\rho^{n+1})) - D_r(\rho(t^{n}),\rho_{\dt,h}(\rho^{n}))\right) \\
&\lesssim 1 + \frac{\sqrt{\dt T}\, \|u\|_{L^{\infty}}}r + \frac{\sqrt{h T\, \|u\|_{L^{\infty}}}}r
\end{align*}
if $h$ is sufficiently small. Plugging this estimate into \eqref{48} then yields the statement in Theorem \ref{T1}.

\subsection{The proofs of the Lemmas \ref{L1}--\ref{L7}}\label{SS:proofs}

To simplify the notation  in the following, we occasionally write $\psi_t$ for $\psi(t,\cdot)$ for a given function $\psi\in L^1((0,T);X)$. 

We start with the short proof of Lemma \ref{L1}.

\begin{proof}[Proof of Lemma \ref{L1}]
We choose $n\in\llb 0,N\rrb$ such that $t\in [t^n,t^{n+1})$ and write $\zeta_t = \zo(t,\cdot)$ if $\zo$ is the Kantorovich potential corresponding to $D_r(\rho_t,\rho_{t^n})$, so that $D_r(\rho_t,\rho_{t^n}) = \io\zeta_t(\rho_t-\rho_{t^n})\, dx$. Using the distributional formulation of \eqref{1}, we then find
\[
D_r(\rho_t,\rho_{t^n}) = \int_{t^n}^t \io \grad\zeta_t\cdot u_s \, \rho_s\, dx\,ds + \int_{t^n}^t\io \zeta_tf_s\, dx\,ds.
\]
In view of \eqref{49}, the first term on the right-hand side is easily controlled by
\[
\frac{\dt}r \|u\|_{L^{\infty}} \|\rho\|_{L^{\infty}(L^1)} \stackrel{\eqref{18}}{\lesssim} \frac{\dt}r \|u\|_{L^{\infty}}.
\]
For the second term we have by using \eqref{e:Dr:W1}
\[
 \int_{t^n}^t\io \zeta_tf_s\, dx\,ds\le \frac1r \int_{t^n}^{t} \|f\|_{W^{-1,1}}\, ds \le \frac{\dt}r\|f\|_{L^{\infty}(W^{-1,1})}.
 \]
 A combination of the previous estimates yields the statement of the lemma.
\end{proof}

The proofs of Lemmas \ref{L2}--\ref{L7} have the flavor of  stability estimates for continuity equations as those that were recently established in \cite{Seis16a,Seis16b}. Their main ingredients are new estimates on the rate of change of the Kantorovich--Rubinstein distance in which Crippa--De Lellis-type inequalities play a central role. Such inequalities, first derived in \cite{CrippaDeLellis08} and adapted to the context of Kantorovich--Rubinstein distances in \cite{BOS} and \cite{OSS},  provide a way of bounding integrals of difference quotients  by $L^p$ norms of gradients.  At the heart of their proofs is a standard tool from harmonic analysis: the maximal function. The maximal  function of a given function $v $ on $\Omega$ is defined by
\[
Mv(x) := \sup_{r>0}\frac1{r^d} \int_{B_r(x)\cap \Omega} |v|\, dx.
\]
One of the fundamental estimates in the theory of singular integrals is the $L^p$ bound on the maximal function,
\begin{equation}
\label{6}
\| M v\|_{L^p} \lesssim \|v\|_{L^p},
\end{equation}
which is valid for any $p\in(1,\infty]$. In the sequel, we will apply this estimate to the velocity gradient $\grad u$. The fact that \eqref{6} fails in the case $p=1$ is the reason why or theory does not extend to the setting where $u\in L^1(W^{1,1})$. Moreover, we need a pointwise estimate which is of Morrey-type, namely
\begin{equation}
\label{7}
\frac{|v(x) - v(y)|}{|x-y|} \lesssim (M \grad \bar v)(x) + (M\grad \bar v)(y) \qquad\text{ for a.e. } x,y \in \Omega .
\end{equation}
Hereby, $\bar v$ denotes a Sobolev extension of $v$ on $\R^d$. The fundamental  inequality for maximal functions \eqref{6} can be found in any standard reference on harmonic analysis, see, for instance, \cite[p.\ 5, Theorem 1]{Stein70}. The Morrey-type estimate \eqref{7} is rather elementary. Its proof is, for instance, contained in \cite[p.\ 143, Theorem 3]{EvansGariepy92}. The Sobolev extension theorem is given in \cite[p.\ 135, Theorem 1]{EvansGariepy92}.

\begin{proof}[Proof of Lemma \ref{L2}]
For notational convenience in the proof, we omit the time dependence of the occurring functions, if the context allows it. We first notice that the  rate of change of the Kantorovich distance between the two solutions $\rho$ and $\rho^h$ which are both advected by the same velocity field takes the form
\[
\frac{d}{dt} D_r\bra[\big]{\rho,\rho^h} = \io \grad \zo\cdot u\;\bra*{\rho-\rho^h}\, dx + \io \zo \, (f-f_{h})\, dx.
\]
The proof for the homogeneous continuity equation is contained in \cite{Seis16a} and is easily adapted to the case that includes sources and sinks.
The second term on the right-hand side is estimated by $D_r(f,f_{h})$ due to the dual variational formula~\eqref{8}. For the first term we claim that
\begin{equation}
\label{16}
\left| \io \grad \zo\cdot u\;\bra[\big]{\rho-\rho^h}\, dx \right| \lesssim \| u\|_{W^{1,p}}\|\rho-\rho^h\|_{L^{\infty}(L^q)}.
\end{equation}
We postpone the proof of \eqref{16} until  later and continue with the estimate for $D_r\bra*{\rho,\rho^h}$. Integration in time and the a priori estimate \eqref{18} imply that 
\begin{align*}
\sup_{(0,T)} D_r\bra[\big]{\rho,\rho^h}
 &\le D_r\bra[\big]{\rho^0,\rho_h^0} + \int_0^T D_r(f,f_{h}) \, dt\\
 &\quad + C\Lambda^{1-\frac1q}\|u\|_{L^1(W^{1,p})}\bra[\big]{\|\rho^0\|_{ L^q} + \|f\|_{L^1(L^q)}}
\end{align*}
for some $C<\infty$. The first two terms on the right-hand side are controlled in a similar way. It is thus enough to focus on one of them, say $D(\rho^0,\rho_h^0)$. The estimate of this term was already given in  \cite[Lemma 8]{SchlichtingSeis16a}. For the convenience of the reader, we provide the proof here again; this time, however, we present an  argument based on \eqref{8}. If $\zo$ denotes the associated Kantorovich potential, it holds
\[
D(\rho^0,\rho_h^0)  =\io \zo \bra[\big]{\rho^0-\rho^0_h}\, dx = \io \bra[\big]{\zo - (\zo)_h}\rho^0\, dx,
\]
where in the second equality we have used the symmetry of the averaging operator $(\cdot)_h$ defined by averaging over each cell $K\in\cT$. For every $x\in K$ we use the Lipschitz property of $\zo$, cf.~\eqref{8}, in the sense that
\[
\left|\zo(x) - (\zo)_h(x)\right| \le \avint_K \left| \zo(x) - \zo(y)\right|\, dy \le \log\bra*{\frac{h}r+1}\le\frac{h}r.
\]
It immediately follows that $D(\rho^0,\rho_h^0) \le {h}/{r} \|\rho^0\|_{L^1}$.

It remains to prove \eqref{16}. We use the marginal condition \eqref{19} and the formula \eqref{9} for the gradient of the Kantorovich potential to rewrite
\[
\int \grad \zo \cdot u\;\bra*{\rho-\rho^h}\, dx = \iint\frac1{|x-y|+r} \frac{x-y}{|x-y|}\cdot  \bra[\big]{u(x) - u(y)}\, d\po(x,y).
\]
Here $\po$ is the optimal transport plan that corresponds to the marginals $(\rho-\rho^h)^+$ and $(\rho-\rho_h)^-$, cf.~\eqref{50}.
At this point, we need an estimate of Crippa--De Lellis-type to control the expression on the right-hand side by $\|\grad u\|_{L^p}$. Dropping $r$ and making use of the Morrey-type inequality \eqref{7}, we find that
\[
\left|\int \grad \zo\cdot u\;\bra*{\rho-\rho^h}\, dx\right| \lesssim \iint \bra[\big]{(M\grad \bar u)(x) + (M\grad \bar u)(y)}\, d\po(x,y),
\]
where  $\bar u$ denotes a Sobolev extension of $u$ to all of $\R^d$. In view of the marginal condition~\eqref{19}, the term on the right-hand side becomes
\[
\iint \bra[\big]{(M\grad \bar u)(x) + (M\grad \bar u)(y)}\, d\po(x,y)= \int M\grad  \bar u\,  | \rho - \rho^h|\, dx .
\]
An application of H\"older's inequality and of the fundamental estimate \eqref{16} thus yield
\[
\left|\int \grad \zo\cdot u\; \bra*{\rho-\rho^h}\, dx\right| \lesssim \|\grad \bar u\|_{L^p} \|\rho - \rho^h\|_{L^q}\le\|\grad \bar u\|_{L^p} \left(\|\rho\|_{L^{\infty}(L^q)} +\| \rho^h\|_{L^{\infty}(L^q)} \right).
\]
The statement in \eqref{16} now follows from the continuity of the extension operator, the a priori estimate \eqref{18} for the continuity equation and the Young-type estimates in Lemma~\ref{lem:disc:norm}.
\end{proof}

The proof of Lemma~\ref{L3} relies in addition on the superposition principle, which represents the solution of the continuity equation~\eqref{1} in terms of a  Lagrangian flow, and some properties of this flow. We will see that it is enough to consider the classical setting with spatially smooth velocity fields. 
For further references and the convenience of the reader, we recall its definition and basic properties. For $0\leq s \leq t \leq T$ a mapping $\Phi_{t,s} : \Omega \to \Omega$ is called \emph{Lagrangian flow}  if
 for every $x\in \Omega$ the mapping $t\mapsto \Phi_{t,s}(x)$ is an (integral) solution to the ordinary differential equation
 \begin{equation}\label{e:regLagrange:ODE}
   \Phi_{t,s}(x) = x + \int_s^t u\bra*{\sigma,\Phi_{\sigma, s}(x)} \, d\sigma \qquad \text{for all } t\in [s,T] .
 \end{equation}
 The no-flux boundary condition~\eqref{2} ensures that the flow $\Phi_{t,s}$   maps $\Omega$ to $\Omega$. By the compressibility assumption~\eqref{20} (which can be retained under approximation) it follows that the Jacobian given by
\begin{equation*}
  J\Phi_{t,s} := \det \grad\Phi_{t,s} = \exp\bra*{ \int_s^t \div u\bra*{\sigma,\Phi_{\sigma,s}} \, d\sigma }
\end{equation*}
is bounded below by $\Lambda^{-1}$.
In particular, it holds that
 \begin{equation}\label{e:regLagrange:compress}
  | \Phi_{t,s}^{-1}(A)|\leq \Lambda |A| \qquad \text{for any Borel subset } A \text{ of } \Omega .
 \end{equation}
The constant $\Lambda$ is accordingly referred to as the compressibility constant of $\Phi$.
It is moreover well-known that the family $\set{\Phi_{t,s}}_{0\leq s \leq t \leq T}$ is an inhomogeneous semigroup, that is
\begin{equation*}
  \Phi_{t_1,s} \circ \Phi_{s,t_0} = \Phi_{t_1,t_0} \qquad\text{for all } 0\leq t_0 \leq s \leq t_1 \leq T.
\end{equation*}
The theory for Lagrangian flows has been extended to the low regularity framework by DiPerna and Lions \cite{DiPernaLions89}. See also the recent contribution of Ambrosio, Colombo and Figalli~\cite{AmbrosioColomboFigalli2015} for the theory maximal flows on bounded domains.

The superposition principle for a solution $\rho$ to the inhomogeneous continuity equation~\eqref{1} reads
\begin{equation}\label{e:sol:inhom}
 \rho_t = \bra*{ \phi_t}_\# \rho^0 + \int_0^t \bra*{\Phi_{t,s}}_\# f_s \, ds ,
\end{equation}
where we have set $\phi_t := \Phi_{t,0}$ and where the push forward of a map $\phi: \Omega \to \Omega$ is defined for any Borel function $\xi$ and Borel measure $\mu$ by
\[
 \int \xi \, d\phi_\# \mu:=  \int \xi \circ \phi \, d\mu  .
\]

We now proceed with the proof of Lemma~\ref{L3}.

\begin{proof}[Proof of Lemma \ref{L3}]
In view of the stability results obtained in \cite{DiPernaLions89} or \cite{Seis16a} and by a standard approximation argument, it is enough to consider vector fields that are smooth in the spatial variable.

Let $\Phi^{\dt}$ be the Lagrangian flow for the vector field $u_{\dt}$. Based on the superposition principle~\eqref{e:sol:inhom}, we have the following representations
\begin{align*}
  \rho(t) = \bra*{\phi_t}_\# \rho^0 + \int_0^t \bra*{ \Phi_{t,s}}_\# f(s) \, ds \quad\text{and}\quad \rho^{\dt}(t) = \bra*{\phi_t^{\dt}}_\# \rho^0 + \int_0^t \bra*{ \Phi_{t,s}^{\dt}}_\# f_{\dt}(s) \, ds.
\end{align*}
Let us introduce some convenient abbreviations for the homogeneous and inhomogeneous components of the solutions $\rho$ and $\rho^\dt$, respectively
\begin{align}
\rho_t^{\hom} &:= \bra[\big]{\phi_t}_\# \rho^0  &&\text{and} & \rho_t^{\dt,\hom} &:= \bra[\big]{\phi_t^{\dt}}_\# \rho^0 \ ; \label{e:rho_hom}\\
F_t  &:=  \int_0^t \bra[\big]{\Phi_{t,s}}_\# f_s \, ds &&\text{and}&  F_t^{\dt} &:=  \int_0^t \avint_{I^\dt_s} \bra[\big]{\Phi_{t,s}^{\dt}}_\# f_\sigma  \, d\sigma  \, ds \ , \label{e:F}
\end{align}
where for $s\in [t^n, t^{n+1})$, we set $I^\dt_s := [t^n,t^{n+1})$.
These definitions imply $\rho_t = \rho_t^{\hom} + F_t$ and likewise $\rho_t^\dt = \rho_t^{\dt,\hom} + F_t^{\dt}$.
Now, by using the triangle inequality~\eqref{5} for the Kantorovich--Rubinstein distance, we estimate
\[
D_r\bra[\big]{\rho(t^{\ell}),\rho^{\dt}(t^{\ell})} \le D_r\bra[\big]{\rho_{t^{\ell}}^{\hom},\rho_{t^{\ell}}^{\dt,\hom}} + D_r\bra[\big]{F_{t^{\ell}},F_{t^{\ell}}^{\dt}} .
\]
By the boundedness assumption~\eqref{ass:u:infty} on the vector field and the definition of the flows~\eqref{e:regLagrange:ODE}, it follows for any $s\in [0,T]$ and $t_0,t_1\in [s,T]$ the basic estimate
\begin{equation}\label{e:flow:infty}
  \max\set*{\norm{\Phi_{t_1,s} - \Phi_{t_0,s}}_{L^\infty} , \norm{\Phi_{t_1,s}^\dt - \Phi_{t_0,s}^\dt}_{L^\infty}} \leq \abs{t_1-t_0} \norm{u}_{L^\infty} .
\end{equation}
We first turn to the estimate of $D_r\bra[\big]{\rho_{t^{\ell}}^{\hom},\rho_{t^{\ell}}^{\dt,\hom}}$. Denoting by $\zo$ the corresponding optimal Kantorovich--Rubinstein potential, we have the estimate
\begin{align*}
D_r\bra[\big]{\rho_{t^{\ell}}^{\hom},\rho_{t^{\ell}}^{\dt,\hom}}
&\overset{\mathclap{\eqref{e:rho_hom}}}{=} \io \zo \bra*{ \bra[\big]{ \phi_{t^{\ell}} }_\# \rho^0 - \bra[\big]{\phi_{t^{\ell}}^\dt }_\# \rho^0} \, dx \\
&= \io \bra*{ \zo \circ \phi_{t^{\ell}} - \zo \circ \phi_{t^{\ell}}^\dt } \rho^0 \, dx \notag \\
&\le \io \log\bra[\bigg]{\frac{|\phi_{t^{\ell}}-\phi_{t^{\ell}}^\dt|}r+1} |\rho^0|\, dx,
\end{align*}
where in the last inequality we have used the Lipschitz property of $\zo$ in~\eqref{8}. It is thus enough to control the quantity on the right-hand side.   Using the triangle inequality in~$\R^d$, the concavity of the logarithm, and the definition of the flows, we first estimate
\[
\begin{aligned}
\MoveEqLeft \io \log\bra[\bigg]{\frac{|\phi_{t^{n+1}} -\phi_{t^{n+1}}^\dt|}r +1}|\rho^0|\, dx - \io \log\bra[\bigg]{\frac{|\phi_{t^n} -\phi_{t^n}^\dt|}r +1}|\rho^0|\, dx\\
&\le \io \frac{\abs[\big]{\int_{t^n}^{t^{n+1}} \bra[\big]{u(\sigma,\phi_\sigma) - u_\dt(\sigma,\phi_\sigma^\dt))\, d\sigma}}}{|\phi_{t^n}- \phi_{t^n}^\dt| +r} \, \abs{\rho^0} \, dx
\end{aligned}
\]
for any $n\in\llb 0,N\rrb$. The numerator in the integrand is controlled as follows: In view of the definition of $u_{\dt}$ and the Morrey-type estimate \eqref{7}, we have that
\begin{align*}
\MoveEqLeft \left|\int_{t^n}^{t^{n+1}} \bra[\big]{u(\sigma,\phi_\sigma) - u_\dt(\sigma,\phi_\sigma^\dt)}\, d\sigma\right|
 = \left| \frac1{\dt} \int_{t^n}^{t^{n+1}}\int_{t^n}^{t^{n+1}} \bra[\big]{u(\sigma,\phi_\sigma) - u(\sigma,\phi_\tau^\dt)}\, d\sigma\,d\tau \, \right|\\
&\lesssim \frac1{\dt}\int_{t^n}^{t^{n+1}}\int_{t^n}^{t^{n+1}} \bra*{(M\grad \bar u_\sigma)(\phi_\sigma) +(M\grad \bar u_\sigma)(\phi_\tau^\dt)} |\phi_\sigma-\phi_\tau^\dt|\, d\sigma\,d\tau.
\end{align*}
Using the estimate~\eqref{e:flow:infty}, we get $|\phi_\sigma-\phi_\tau^\dt|\le |\phi_{t^n}-\phi_{t^n}^\dt| + 2\dt \|u\|_{L^{\infty}}$. Combining the previous estimates thus yields
\begin{equation}\label{e:est:rho_hom:p1}
\begin{aligned}
&\int \log\bra[\bigg]{\frac{|\phi_{t^{n+1}} -\phi_{t^{n+1}}^\dt|}r +1}\abs{\rho^0}\, dx - \int \log\bra[\bigg]{\frac{|\phi_{t^n} -\phi_{t^n}^\dt|}r +1}\abs{\rho^0}\, dx  \\
&\lesssim\bra*{1+\frac{\dt\|u\|_{L^{\infty}}}{r}} \int_{t^n}^{t^{n+1}}\! \avint_{t^n}^{t^{n+1}} \!\! \io \bra[\big]{(M\grad\bar u_\sigma)\circ\phi_\sigma  +(M\grad\bar u_\sigma)\circ\phi_\tau^\dt}\abs{\rho^0}\, dx\,d\tau\,d\sigma.
\end{aligned}
\end{equation}
It remains to notice that by recalling~\eqref{e:rho_hom} and applying H\"older's inequality it follows
\begin{align*}
\int_{t^n}^{t^{n+1}} (M\grad\bar u_\sigma)(\phi_\sigma)\,\abs{\rho^0}\, dx\,d\sigma  &= \int_{t^n}^{t^{n+1}} (M\grad\bar u_\sigma) \, \bra{\phi_{\sigma}}_\#\abs{\rho^0} \, dx\,d\sigma \\
&\le \int_{t^n}^{t^{n+1}} \|M\grad\bar u_\sigma\|_{L^p}\, d\sigma \ \ \sup_{\sigma\in [0,T]}\norm[\big]{\bra{\phi_{\sigma}}_\#\abs{\rho^0}}_{L^q} .
\end{align*}
The terms on the right-hand side are estimated with the help of the fundamental inequality for maximal functions~\eqref{6} and the compressibility property~\eqref{e:regLagrange:compress}. The second term on the right-hand side of~\eqref{e:est:rho_hom:p1} is treated similarly by using in addition Lemma~\ref{lem:disc:norm}. Hence, summing over $n\in \llb 0,\ell-1\rrb$ yields
\[
D_r\bra[\big]{\rho_{t^{\ell}}^{\hom},\rho_{t^{\ell}}^{\dt,\hom}} \lesssim 1 + \frac{\dt \|u\|_{L^{\infty}}}r.
\]

Let us now focus on the term $D_r(F_{t^{\ell}},F_{t^{\ell}}^\dt)$. If $\zo$ is the Kantorovich potential corresponding to this transport distance, then we obtain analogously to the above computation
\begin{align*}
D_r(F_{t^{\ell}},F_{t^{\ell}}^\dt) &\;\overset{\mathclap{\eqref{e:F}}}{=}\; \int_0^{t^{\ell}} \avint_{I^\dt_s} \io \zo \bra[\Big]{ \bra[\big]{\Phi_{t^{\ell},s}}_\# f_s - \bra[\big]{\Phi_{t^{\ell},s}^\dt}_\# f_\sigma} \, dx \, d\sigma \, ds \notag \\
 &\;=\; \int_0^{t^{\ell}} \avint_{I^\dt_s} \io \bra*{\zo\circ \Phi_{t^{\ell},s} - \zo \circ \Phi_{t^{\ell},\sigma}^\dt } f_s\, dx \, d\sigma \, ds \notag \\
 &\;\le\; \int_0^{t^{\ell}}\avint_{I^\dt_s}  \io \log\bra[\Bigg]{\frac{|\Phi_{t^{\ell},s} - \Phi_{t^{\ell},\sigma}^\dt|}r + 1}\, |f_s|\, dx \,d\sigma \, ds\\\
 & =: \int_0^{t^{\ell}} \avint_{I^\dt_s}J(t,s,\sigma) \,d\sigma \, ds .
\end{align*}
The estimation of the expression on the right proceeds very analogously to the estimations above. We will thus only sketch it. In a first step, we obtain, similarly as in \eqref{e:est:rho_hom:p1} that
\begin{align*}
\MoveEqLeft
J(t^{n+1},s,\sigma) -J(t^n,s,\sigma)\\
&\lesssim\left(1+\frac{\dt\,\|u\|_{L^{\infty}}}r\right) \int_{t^{n}}^{t^{n+1}} \avint_{t^n}^{t^{n+1}}\io \left((M\grad u_{\tau})(\Phi_{\tau,s}) + (M\grad u_{\tau})(\Phi_{\gamma,\sigma}^{\delta})\right) |f_s|\, dx \, d\gamma \,d\tau,
\end{align*}
for any $n\in \llb 0,N\rrb$ with $s\le t^n$. By using the same arguments as above and summing over all $n\in \llb k,\ell-1\rrb$, where $k$ is such that $s,\sigma \in \left[t^{k-1},k\right)$, we find that
\begin{equation}
\label{51}
J(t^{\ell},s,\sigma) - J(t^k,s,\sigma) \lesssim \left(1+\frac{\dt\, \|u\|_{L^{\infty}}}r\right) \|u\|_{L^1(W^{1,p})} \|f_s\|_{L^q}.
\end{equation}
We observe that thanks to \eqref{e:flow:infty}, we have the estimate
\[
J(t^k,s,\sigma) \lesssim \frac{\dt\,\|u\|_{L^{\infty}}}r \|f_s\|_{L^1}.
\]
Plugging this bound into \eqref{51} and integrating over $\sigma$ and $s$ thus yields
\[
D_r(F_{t^{\ell}},F_{t^{\ell}}^{\delta})\lesssim 1+\frac{\dt\, \|u\|_{L^{\infty}}}r,
\]
which is what we aimed to show.
\end{proof}

\begin{proof}[Proof of Lemma \ref{L4}]
We split the term $\I^n$ into the sum $\I_1^n + \I^n_2$ with
\begin{align*}
\I_1^n &:= \int_{t^n}^{t^{n+1}} \int \grad \zo \cdot u\bra[\big]{\rho-\widehat \rho_{\dt,h}}\, dx\,dt ,\\
\I_2^n&:= \int_{t^n}^{t^{n+1}} \int \grad \zo \cdot u\bra[\big]{\widehat \rho_{\dt,h} - \rho_h^{n+1}}\, dx\,dt.
\end{align*}
We claim that
\begin{equation}\label{45}
\sum_{n=0}^{N-1} \I_1^n\lesssim 1\quad\mbox{and}\quad \sum_{n=0}^{N-1} \I^n_2 \lesssim\frac{\sqrt{\dt\, T}}r\|u\|_{L^{\infty}}.
\end{equation}

The proof  of the first  estimate in \eqref{45} is a direct consequence of estimate \eqref{16} that was established  in the proof of Lemma  \ref{L2}. (Here, it is important to notice that $\zo$ is a Kantorovich potential associated with $D_r(\rho,\widehat \rho_{\dt,h})$.) In fact, applying \eqref{16} together with the triangle inequality for $\|\cdot \|_{L^q}$ yields
\[
\I^n_1 \lesssim \Lambda^{\frac{1}{p}}\int_{t^n}^{t^{n+1}} \|u\|_{W^{1,p}}\, ds \bra*{\|\rho^0\|_{L^q} + \|f_{L^1(L^q)}}.
\]
Summing over $n$ and invoking the a priori estimates \eqref{18} and \eqref{47} gives the result.

For the second statement in \eqref{45}, we notice that 
because of
\[
  \widehat \rho_{\dt, h}(t) - \rho_h^{n+1} = \dt^{-1}(t^{n+1} - t)(\rho_h^n - \rho_h^{n+1}),
\]
we have
\[
 \I_2^n  = \avint_{t^n}^{t^{n+1}}(t^{n+1}-t) \int \grad\zo \cdot u\bra*{\rho_h^n-\rho_h^{n+1}}\, dx\,dt .
 \]
Using the Lipschitz bound for $\zo$~\eqref{49} and summing over $n$ gives
\[
\sum_n \I_2^n \le \frac{\dt\, \|u\|_{L^{\infty}}}r  \sum_n \int |\rho_h^n-\rho_h^{n+1}|\, dx.
\]
The conclusion is an immediate consequence of the temporal weak $BV$ estimate~\eqref{est:nabla:2} from Proposition~\ref{prop:nabla}.
\end{proof}

\begin{proof}[Proof of Lemma~\ref{L6}]
For the proof, we write $\zeta := \zo$ and $u^n := u(t^n)$. We first notice that  the anti-symmetry of $u_{KL}^n$ implies that
\[
\sum_K \zeta_K^n \sum_{L\sim K} |K\edge L|u_{KL}^n\frac{\rho_K^{n+1}+ \rho_L^{n+1}}2 = \sum_K \rho_K^{n+1} \sum_{L\sim K} |K\edge L| u_{KL}^n\frac{\zeta^n_K - \zeta^n_L}2.
\]
Moreover, the divergence theorem yields
\[
\zeta_K^n \sum_{L\sim K}|K\edge L|u_{KL}^n = \int_K\zeta_K^n\div u^n \, dx .
\]
Regarding the other term in $\II^n$, we have by integration by parts on each cell $K$ that
\[
\int_K \grad\zeta\cdot u^n \, dx = \sum_{L\sim K} \int_{K\edge L}\zeta u^n \cdot \nu_{KL}\, d\Ha^{d-1} - \int_K\zeta\div u^n\, dx,
\]
It thus follows that  $\II^n$ can be rewritten as
\begin{align*}
\II^n & = \dt \sum_K \rho_K^{n+1} \sum_{L\sim K} \bra*{\int_{K\edge L}\zeta^n \,u^n \cdot \nu_{KL}\, d\Ha^{d-1}- |K\edge L|u_{KL}^n\frac{\zeta_K^n+\zeta_L^n}2}\\
&\quad - \dt\sum_K \rho^{n+1}_K \int_K\bra*{\zeta^n-\zeta_K^n}\div u^n \, dx,
\end{align*}
where $\zeta^n = \zeta^n(x)$ denotes the time average of $\zeta$ over $\bra*{t^n,t^{n+1}}$. We can furthermore decompose this expression
\begin{align*}
\II^n &= \dt \sum_K \rho_K^{n+1} \! \sum_{L\sim K}\int_{K\edge L} \! \zeta^n \bra[\bigg]{\bra[\big]{u^n-u_K^n}- \avint_{K\edge L}\! \bra[\big]{u^n-u_K^n}\, d\Ha^{d-1}}\cdot \nu_{KL}\, d\Ha^{d-1} \\
&\quad + \dt \sum_K\rho_K^{n+1} \sum_{L\sim K} |K\edge L|u_{KL}^n \bra*{\avint_{K\edge L} \zeta^n \, d\Ha^{d-1} - \frac{\zeta_K^n + \zeta_L^n}2}\\
&\quad- \dt \sum_K \rho_K^{n+1} \int_K \bra*{\zeta^n - \zeta_K^n}\div u^n \, dx,
\end{align*}
and we set $\II^n =: \II^n_1+  \II^n_2 + \II^n_3$, accordingly. To estimate $\II_1^n$, we notice that we can smuggle in the constant function $\zeta_K^n$ in each boundary integral, leading to
\[
\II^n_1  \le 2 \dt  \sum_K\rho_K^{n+1}\|\zeta^n-\zeta^n_K\|_{L^{\infty}(K)} \int_{\partial K} |u^n-u_K^n|\, d\Ha^{d-1}.
\]
Thanks to the Lipschitz property of $\zo$, cf.~\eqref{49}, it holds that $|\zeta^n - \zeta_K^n|\le h/r$ uniformly in $K$. We now combine the trace and Poincar\'e estimate~\eqref{3}, to the effect that
\[
\|u^n-u_K^n\|_{L^1(\partial K)} \lesssim h^{-1}\|u^n-u_K^n\|_{L^1(K)} + \|\grad u^n\|_{L^1(K)} \lesssim \|\grad u^n\|_{L^1(K)}.
\]
We are thus left with
\[
\II^n_1 \lesssim \frac{h \, \dt}r \sum_K \rho_K^{n+1} \|\grad u^n\|_{L^1(K)} = \frac{h\, \dt}r \io \rho_h^{n+1}|\grad u^n|\, dx.
\]
Summing over $n$ and using H\"older's inequality and the a priori bound \eqref{47} yields that
\[
\sum_n \II^n_1 \lesssim \frac{h}r.
\]
To estimate $\II^n_2$, we first  notice that for symmetry reasons, we have
\[
\II^n_2 = \dt\sum_K\sum_{L\sim K} \frac{\rho_K^{n+1} - \rho_L^{n+1}}2 |K\edge L| u_{KL}^n \bra[\bigg]{\avint_{K\edge L}\zeta^n\, d\Ha^{d-1} -\frac{\zeta_K^n +\zeta_L^n}2}.
\]
Using again the Lipschitz property for $\zo$, cf.~\eqref{49}, the latter is bounded as follows:
\[
\II^n_2\le \frac{\dt\, h}r \sum_K\sum_{L\sim K} |K\edge L||u_{KL}^n| |\rho_K^{n+1}-\rho_L^{n+1}|.
\]
We apply the spatial weak $BV$ estimate~\eqref{est:nabla:1} from Proposition~\ref{prop:nabla} showing
\[
  \sum_n\II^n_2 \lesssim \frac{\sqrt{h T \norm{u}_{L^\infty}}}r .
\]
It remains to investigate $\II^n_3$. Once again, we use the Lipschitz bound of $\zo$ in~\eqref{49}, Hölder's inequality and bound $\div u$ by $\nabla u$ up to a dimension dependent constant to
estimate
\[
\II_3^n \le \frac{\dt \, h}r \|\rho_h^{n+1}\|_{L^q}\|\nabla u^n\|_{L^{p}}.
\]
Summing over $n$ and using the a priori estimate \eqref{e:Lq:stability} yields
\[
\sum_n \II_3^n \lesssim \frac{h}{r}.
\]
This  concludes the proof of Lemma \ref{L6}.
\end{proof}

\begin{proof}[Proof of Lemma~\ref{L7}]
  From the Lipschitz bound~\eqref{49} we deduce for any two neighboring cells $K$ and $L$ that
  \begin{equation*}
    |(\zo)_K^{n+1} - (\zo)_L^{n+1}|\le \avint_K\avint_L |\zo(x) - \zo(y)|\, dx\,dy \lesssim \frac{h}{r} .
  \end{equation*}
  Therewith, we estimate after doing a symmetrization of the sum:
  \begin{align*}
    \III^n &= \frac{\dt}{4} \sum_{K} \sum_{L\sim K} \abs{K \edge L} \abs{u_{KL}^n} \bra*{ \rho_K^{n+1} - \rho_L^{n+1}} \bra*{ (\zo)_K^{n+1} - (\zo)_L^{n+1}} \\
    &\lesssim \frac{\dt \, h}{r} \sum_{K} \sum_{L\sim K} \abs{K \edge L} \abs{u_{KL}^n} \abs*{ \rho_K^{n+1} - \rho_L^{n+1}} .
  \end{align*}
  The proof concludes by summing over $n\in\llb 0, N-1\rrb$ and applying the spatial weak $BV$ estimate~\eqref{est:nabla:1} from Proposition~\ref{prop:nabla}.
\end{proof}

\appendix
\section{The \texorpdfstring{$q$}{q}-mean}\label{s:appendix}
\noindent We briefly describe some helpful estimates for the $q$-mean defined for $q>1$ by
\[
\theta_q: \R_+ \times \R_+ \to \R_+ \qquad\text{with}\qquad \theta_q(a,b) := \frac{q-1}{q} \frac{a^q - b^q}{a^{q-1} -b^{q-1}} .
\]
\begin{enumerate}[ (i) ]
 \item The function $\theta_q$ has the following integral representation
\begin{equation*}
  \theta_q(a,b) = \int_0^1 \bra*{ (1-s) a^{q-1} + s b^{q-1}}^{\frac{1}{q-1}} \, ds .
\end{equation*}
 \item The function $\theta_q$ is $1$-homogeneous: for any $c>0$ it holds $\theta_q(c\,a, c\,b) = c\,\theta_q(a,b)$.
 \item For any positive numbers $a\ne b$ is $q\mapsto \theta_q(a,b)$ strictly increasing.
 \item The function $(a,b)\mapsto \theta_q(a,b)$ is concave for $q\in (1,2)$ and convex for $q\in (2,\infty)$.
 \item For any $a,b>0$ it holds
\[
     \abs[\big]{ \theta_2\bra*{a,b}  - \theta_q\bra*{a  , b}} \leq \frac{\abs{q-2}}{q} \frac{\abs{a-b}}{2}.
\]
\end{enumerate}
\begin{proof} For the identity (i) let $t_q(a,b;s) := \bra*{ (1-s) a^{q-1} + s b^{q-1}}^{\frac{1}{q-1}}$ denote the integrand on the right hand side. Then, by a straightforward calculation it follows $\partial_s t_q(a,b;s) = \frac{b^{q-1}-a^{q-1}}{q-1} t(s)^{2-q}$ and hence $T_q(a,b;s) := \frac{q-1}{q} \frac{t(s)^q}{b^{q-1}-a^{q-1}}$ is its primitive from which (i) follows.

The $1$-homo\-geneity as stated in (ii) follows immediately from the definition.

For proving (iii), we note that $t_q(a,b;s)$ is the $\ell^{q-1}$ norm on the probability space $\bra[\big]{\set{0,1},(1-s) \delta_0 + s \delta_1}$ for a function $f:\set{0,1}\to \R$ taking values $f(0)=a$ and $f(1)=b$. Hence, the statement is a consequence of the ordering of the $\ell^p$ spaces, which extends to any value $p\in \R$.

The property (iv) follows by calculating the Hessian of $(a,b)\mapsto t_s(a,b):$
 \begin{align*}
   \Hess_{a,b} t_q(a,b;s) &= (q-2) \; (1-s)s \; \bra[\big]{ a\, b \, t_s(a,b;s)}^{q-3}
   \begin{pmatrix} a^2 &  ab \\ ab & b^2 \end{pmatrix} .
 \end{align*}
 Now, one immediately recovers that $(a,b)\mapsto t_q(a,b;s)$ is negative semidefinite for $q\in [0,2)$ and positive semidefinite for $q>2$.

 For (v), we can assume by symmetry and $1$-homogeneity that $a\in (0,1)$ and $b=1$. From (iv), we have that the mapping $(0,1) \ni a\mapsto \theta_q(a,1)$ is concave for $q\in (1,2)$ and convex for $q>2$. Let us first assume $q\in (1,2)$, then by convexity of $a\mapsto \theta_2(a,1)-\theta_q(a,1)$, we estimate using the secant inequality between the points~$0$ and~$1$:
 \begin{align*}
   0 \;\overset{\mathclap{\text{(iii)}}}{\leq}\; \theta_2(a,1) - \theta_q(a,1) \leq  \bra*{\theta_2(0,1) - \theta_q(0,1)} \bra{1-a} = \bra*{\frac{1}{2} - \frac{q-1}{q}} \bra{1-a} .
 \end{align*}
 For $q>2$, we apply the same argument to the convex function $a\mapsto \theta_q(a,1)-\theta_2(a,1)$.
\end{proof}

\bibliographystyle{abbrv}
\bibliography{coarsening}
\end{document}